\DeclareMathOperator*{\argmin}{\mbox{argmin}}
\def\prox{\mbox{prox}}
\newtheorem{theorem}{Theorem}
\newtheorem{proposition}{Proposition}
\newtheorem{lemma}{Lemma}[section]
\newtheorem{remark}{Remark}
\def\bbR{\mathbb{R}}
\def\bbN{\mathbb{N}}
\def\sC{\mathcal{C}}
\def\sP{\mathcal{P}}
\def\sO{\mathcal{O}}
\newcommand{\II}{\mbox{I}}
\definecolor{TypalBlue}{HTML}{7086DF}  
\definecolor{TypalBlueDark}{HTML}{5467b2}
\definecolor{TypalRed}{HTML}{D32B08}    
\definecolor{TypalGreen}{HTML}{3cb371}
\author{%
  Howard Heaton \\
  Typal Academy  
}
\def\title{Proximal Projection Method for Stable Linearly Constrained Optimization}
\author{%
  Howard Heaton \\
  Typal Academy   
}
\renewcommand{\maketitle}{ 
    \begin{center}
        \rule{\textwidth}{4pt}\\[20pt]
        {\LARGE \title} \\[10pt]
        \rule{\textwidth}{2pt} \\[20pt]

        {\bf Howard Heaton} \\ 
        Typal Academy \\[20pt] 
    \end{center} 
}
\def\directory{.}
\def\bpTimePP{8.06}
\def\bpTimeLB{7.40}
\def\bpTimePDHG{20.60}
\def\bpTimeLMM{22.85}
\def\emdTimePP{107.20}\def\emdTimeGPDHG{107.38}\def\emdTimePDHG{43.71}\def\emdTimePPoverPDHG{2.5}
\begin{document}


\maketitle
 
\begin{abstract}
    Many applications using large datasets require efficient methods for minimizing a proximable convex function subject to satisfying a set of linear constraints within a specified tolerance.  
    For this task,  we present a proximal projection (PP) algorithm, which is an instance of Douglas-Rachford splitting that directly uses projections onto the set of constraints. Formal guarantees are presented to prove  convergence of PP estimates to optimizers. Unlike many methods  that obtain feasibility asymptotically,  each PP iterate is feasible. Numerically, we show PP  either matches or outperforms  alternatives (\textit{e.g.} linearized Bregman, primal dual hybrid gradient, proximal augmented Lagrangian, proximal gradient) on problems in basis pursuit, stable matrix completion, stable principal component pursuit, and the computation of earth mover's distances. 
\end{abstract}
  
\vspace*{10pt}
{\small
    \begin{tabular}{rp{6.05in}}
    {\bf Key words:}\hspace*{-5pt}
    & proximal, projection, constrained optimization, Douglas-Rachford splitting, basis pursuit, compressed sensing, matrix completion, principal component pursuit, earth mover's distance, Wasserstein distance 
    \end{tabular}
}
 
\section{Introduction}

With the profound increase in the use of extremely high-dimensional data, an ongoing challenge is to create efficient tools for processing this data. In several important applications, this processing   takes the form of solving a convex optimization problem. 
Practical value is, thus, derived from efficient and easy-to-code methods for such tasks, which is the focus of this work.  
Specifically, for a convex  function\footnote{We set $\overline{\bbR} \triangleq \bbR \cup \{+\infty\}$.} $f\colon\bbR^n\rightarrow\overline{\bbR}$, a matrix $A\in\bbR^{m\times n}$,  a vector $b\in\bbR^m$ and a scalar $\varepsilon \geq 0$, this work considers the problem  
\begin{equation}
    \min_{x\in\bbR^n} f(x) 
    \quad\mbox{s.t.}\quad 
    \|Ax - b\|\leq \varepsilon.
    \tag{P}
    \label{eq: problem}
\end{equation} 
We let $\sC \triangleq \{ x : \|Ax-b\|\leq\varepsilon\}$ denote the constraint set so that (\ref{eq: problem}) equates to minimizing $f$ over $\sC$. 
As many applications introduce a tolerance $\varepsilon > 0$ on linear constraints to ensure stability with respect data being corrupted by noise, herein we refer to (\ref{eq: problem}) as a stable linearly constrained optimization problem.
\\ 

The problem (\ref{eq: problem}) is well-studied.
When $\varepsilon = 0$, several algorithms can be directly applied (\textit{e.g.}  primal dual hybrid gradient \cite{chambolle2011first}, conditional gradient \cite{frank1956algorithm}). When $\varepsilon > 0$, an auxiliary variable can be introduced to decompose $\sC$ into a linear constraint and a ball constraint; for example, the alternating direction method of multipliers \cite{boyd2011distributed,deng2016global} can be readily applied to that formulation. In some works, the constraint is moved into the objective as a quadratic penalty \cite{zhou2010stable,candes2010matrix}; for an appropriate penalty weight, this soft-penalty variation shares the same minimizers as (\ref{eq: problem}). The superiorization methodology \cite{censor2014projected,censor2010perturbation} may also approximate solutions to (\ref{eq: problem}) by interweaving projection steps onto $\sC$ (or sets whose intersection forms $\sC$) with subgradient steps. Other approaches \cite{aybat2014efficient,cai2013fast} use various forms of smoothing or added regularization to solve problems that approximate (\ref{eq: problem}).
\\

Much prior work aims to solve (\ref{eq: problem}) by either approximating (\ref{eq: problem}) or obtaining feasibility asymptotically. In contrast, we solve (\ref{eq: problem}) and maintain feasibility at each step of our iterative algorithm. This is done at comparable per-iteration cost to existing methods by  interweaving projections onto $\sC$ and proximal operations (defined below). 

\newpage

\paragraph{Contribution.} 
Our main result is Algorithm \ref{alg: pp} and its convergence to solutions of (\ref{eq: problem}), which is possible due to a novel formula for the projection onto the constraint set $\sC$. Our numerical examples show favorable performance of PP against alternatives in a varied collection of practical problems, \textit{i.e.} basis pursuit, stable principal component pursuit, computation of earth mover's distances, and stable matrix completion.

\paragraph{Notation.} 
Here, $\|\cdot\|$ is the Euclidean norm,
$\|\cdot\|_F$ is the Frobenius norm, $\|\cdot\|_1$ is the $1$-norm, and $\|\cdot\|_\star$ is the nuclear norm. The relative interior domain of $f$ is $\mbox{ri}(\mbox{dom}(f))$. For an integer $n\in\bbN$, we set $[n] \triangleq \{1,\ldots,n\}$.

\begin{algorithm}[t]
    \caption{{\tt Proximal Projection (PP) for (\ref{eq: problem}) with $\varepsilon=0$}}
    \label{alg: pp-eps=0}
    \setstretch{1.35}
    \def\tab{\hspace*{15pt}}
    \begin{algorithmic}[1]
        \STATE{ 
            PP($f$, $A$, $b$):
        }
        \STATE{
            \tab {\bf initialize}   iterate $z \in\bbR^n$ and parameter $\alpha > 0$ 
        }

        \STATE{
            \tab {\bf while} stopping criteria not met
        }
             
        \STATE{
            \tab\tab $x\leftarrow z - A^\top (AA^\top)^{-1} (Az - b)$
        }
        \STATE{
            \tab\tab $z \leftarrow z + \mbox{prox}_{\alpha f}\left( 2x - z\right)  -x$
        }

        \STATE{
            \tab {\bf return} $x$
        }
    \end{algorithmic}
    \end{algorithm}  

\section{Main Results} \label{sec: results}
In this section, we define our proposed proximal  projection (PP) algorithm and analytically show it generates a solution to (\ref{eq: problem}). Throughout, we make use of combinations of the following conditions: 
 
\begin{enumerate}[label=(C\arabic*)]
    \item the function $f\colon\bbR^n\rightarrow \overline{\bbR}$ is closed, convex, and proper; \label{c: CCP}
    \item either the matrix $A$ has full row-rank or $\varepsilon > 0$;
    \label{c: full-rank}  
    \item   
    there is $y \in \bbR^n$ such that, if $\varepsilon = 0$, then $Ay = b$ and, if $\varepsilon > 0$, then  $\|Ay-b\| < \varepsilon$;
    \label{c: b-image}    
    \item either $f$ is coercive or $\sC$ is bounded; \label{c: coercive-bounded} 
    \item condition \ref{c: b-image} holds for $y \in \mbox{ri}(\mbox{dom}(f))$.
    \label{c: proper-feasible}
\end{enumerate}

To minimize the function $f$,   we make use of a proximal operator. 
For a parameter $\alpha > 0$, this is defined by  
\begin{equation}
    \prox_{\alpha f}(x) \triangleq \argmin_{u \in \bbR^n} \alpha f(u) + \frac{1}{2}\|u - x \|^2,
\end{equation}
and \ref{c: CCP} ensures it uniquely exists \cite{beck2017first,bauschke2017convex,ryu2022large}. In many instances, explicit formulas exist for the proximal (\textit{e.g.} see \cite[Chapter 6]{beck2017first}). Proximals also generalize projections. Specifically, if $\sC$ is a nonempty, closed and convex set and $\delta_\sC$ is the indicator function taking value $0$ in $\sC$ and $\infty$ elsewhere, then 
\begin{equation}  
    \prox_{\alpha \delta_\sC}(x)  
    = \sP_{\sC}(x) 
    \triangleq \argmin_{u\in\sC} \frac{1}{2}\|u-x\|^2.
\end{equation}
That is, the proximal for $\delta_\sC$ is precisely the projection $\sP_\sC$ onto $\sC$.
This leads to our next conditions \ref{c: full-rank} and \ref{c: b-image}, which are used to obtain our projection formula in the following lemma. (See Appendix \ref{app: proofs} for a proof.)

\begin{algorithm}[t]
    \caption{{\tt Proximal   Projection (PP)  for (\ref{eq: problem})   }}
    \label{alg: pp}
    \setstretch{1.25}
    \def\tab{\hspace*{15pt}}
    \begin{algorithmic}[1]
        \STATE{
            PP($f$, $A$, $b$, $\varepsilon$):
        }
        \STATE{
            \tab {\bf initialize} iterate $z \in\bbR^n$ and parameter $\alpha > 0$
        }
        \STATE{
            \tab {\bf while} stopping criteria not met
            \label{alg: pp-while-start}
        }
        \STATE{
            \tab\tab {\bf if} $\|Az-b\| \leq \varepsilon$
        }
        \STATE{\tab\tab\tab$x \leftarrow z$}         
        \STATE{\tab\tab{\bf else}}
        \STATE{
            \tab\tab\tab$\tau \leftarrow \mbox{solution}(1 = \tau\|(AA^\top + \varepsilon \tau \mbox{I})^{-1} (Az-b)\|)$
            \label{alg: pp-tau}
        }
        \STATE{
            \tab\tab\tab$x\leftarrow z - A^\top (AA^\top + \varepsilon\tau \II)^{-1} (Az-b)$
            \label{alg: pp-x-update}
        }

        \STATE{
            \tab\tab $z \leftarrow z+ \mbox{prox}_{\alpha f}\left( 2x - z\right) - x$
            \label{alg: pp-z-update}
        }
        \STATE{
            \tab {\bf return} $x$
        } 
    \end{algorithmic}
    \end{algorithm}

\def\projectionProposition{
    If conditions \ref{c: full-rank} and  \ref{c: b-image} hold and $\sC \triangleq \{ x : \|Ax-b\|\leq\varepsilon\}$, then 
    \begin{equation}
        \sP_{\sC}(x) 
        = \begin{cases}
        \begin{array}{cl}
            x & \mbox{if}\ \|Ax-b\|\leq\varepsilon, \\
            x - A^\top(AA^\top +\varepsilon\tau_x \II)^{-1}(Ax-b) & \mbox{otherwise},
        \end{array}
        \end{cases}
    \end{equation}
    where, if $\|Ax-b\|>\varepsilon$, the scalar $\tau_x$ is the unique positive solution to 
    \begin{equation}
        1 = \tau \| (AA^\top +\varepsilon \tau \II)^{-1} (Ax-b) \|.
    \end{equation}
}

\begin{proposition}[Projection Formula] \label{prop: projection}
    \projectionProposition
\end{proposition}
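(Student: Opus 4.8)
The plan is to characterize $\sP_\sC(x)$ as the unique minimizer of the strictly convex program $\min_u \tfrac12\|u-x\|^2$ subject to $u\in\sC$, and to read off the stated formula from its Karush--Kuhn--Tucker (KKT) conditions. The case $\|Ax-b\|\le\varepsilon$ is immediate: then $x\in\sC$, so the zero-distance point $u=x$ is trivially the projection. Hence the substance lies in the case $\|Ax-b\|>\varepsilon$, where $x\notin\sC$ and I expect the constraint to be active at the optimum. For that case I would replace the constraint $\|Au-b\|\le\varepsilon$ by the equivalent smooth inequality $g(u)\triangleq\tfrac12\big(\|Au-b\|^2-\varepsilon^2\big)\le 0$, whose left side is convex; when $\varepsilon>0$, condition \ref{c: b-image} supplies a point with $Au=b$, so $g(u)=-\tfrac12\varepsilon^2<0$ and Slater's condition holds, guaranteeing that the KKT conditions are both necessary and sufficient. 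The boundary case $\varepsilon=0$ reduces to projection onto the affine set $\{u:Au=b\}$, where \ref{c: full-rank} makes $AA^\top$ invertible and the classical normal-equations formula $u=x-A^\top(AA^\top)^{-1}(Ax-b)$ applies directly, matching the claimed expression with the $\varepsilon\tau_x\II$ term absent.

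Next I would write the Lagrangian stationarity condition $(u-x)+\mu A^\top(Au-b)=0$ with multiplier $\mu\ge 0$. Setting $r\triangleq Au-b$ and left-multiplying by $A$ yields $(\II+\mu AA^\top)r=Ax-b$, so $r=(\II+\mu AA^\top)^{-1}(Ax-b)$ and hence $u=x-\mu A^\top(\II+\mu AA^\top)^{-1}(Ax-b)$. Pulling the scalar inside the inverse via $\mu(\II+\mu AA^\top)^{-1}=(AA^\top+\tfrac1\mu\II)^{-1}$ rewrites this as $u=x-A^\top(AA^\top+\tfrac1\mu\II)^{-1}(Ax-b)$. To see that the constraint is active, note that $\mu=0$ forces $u=x\notin\sC$, contradicting feasibility of the minimizer; thus $\mu>0$, complementary slackness gives $\|r\|=\varepsilon$, and the substitution $\tau\triangleq 1/(\varepsilon\mu)$ turns the formula into $u=x-A^\top(AA^\top+\varepsilon\tau\II)^{-1}(Ax-b)$ and turns $\|r\|=\varepsilon$ into exactly $1=\tau\|(AA^\top+\varepsilon\tau\II)^{-1}(Ax-b)\|$.

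It remains to prove that this scalar equation has a unique positive root, which I anticipate is the crux of the argument. I would diagonalize $AA^\top=U\Lambda U^\top$ with $\Lambda=\mathrm{diag}(\lambda_1,\dots,\lambda_m)$, $\lambda_i\ge 0$, set $c\triangleq U^\top(Ax-b)$, and study $\phi(\tau)^2\triangleq\tau^2\sum_i c_i^2/(\lambda_i+\varepsilon\tau)^2$ for $\tau>0$. The key structural input is that any index with $\lambda_i=0$ corresponds to a left singular direction orthogonal to the range of $A$; since \ref{c: b-image} makes $Ax-b$ lie in that range, the matching coordinate $c_i$ vanishes and contributes nothing. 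Consequently each surviving term $(\tau/(\lambda_i+\varepsilon\tau))^2$ with $\lambda_i>0$ is strictly increasing in $\tau$, so $\phi$ is strictly increasing; moreover $\phi(\tau)\to 0$ as $\tau\to 0^+$ and $\phi(\tau)\to\|Ax-b\|/\varepsilon>1$ as $\tau\to\infty$ (the latter limit strict because we are in the case $\|Ax-b\|>\varepsilon$, using $\|c\|=\|Ax-b\|$). The intermediate value theorem together with strict monotonicity then delivers a unique $\tau_x>0$ with $\phi(\tau_x)=1$, completing the identification. The main obstacle is precisely this last step: leveraging conditions \ref{c: full-rank} and \ref{c: b-image} to annihilate the degenerate directions in the null space of $A^\top$ so that $\phi$ is genuinely strictly increasing and its limits straddle $1$.
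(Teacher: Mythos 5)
Your proposal is correct, but it takes a genuinely different route from the paper's. The paper splits the constraint with an auxiliary variable $z = Au-b$, $\|z\|\le\varepsilon$, forms the explicit Lagrangian dual, invokes strong duality, and solves the resulting nonsmooth dual in a vector variable $y$; the scalar $\tau_x$ then emerges as $1/\|y^\star\|$, with existence and uniqueness supplied by the separate auxiliary Lemma \ref{lemma: tau-exists} (proved via the SVD of $A$). You instead apply KKT conditions directly to the smooth reformulation $\tfrac{1}{2}(\|Au-b\|^2-\varepsilon^2)\le 0$, where Slater's condition follows from \ref{c: b-image} when $\varepsilon>0$: stationarity with a single scalar multiplier $\mu$ yields the formula in a few lines, activeness ($\mu>0$) follows from $x\notin\sC$, and the substitution $\tau=1/(\varepsilon\mu)$ turns complementary slackness into exactly the stated scalar equation. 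Your closing monotonicity/intermediate-value analysis of $\phi(\tau)$ --- diagonalizing $AA^\top$ and using \ref{c: b-image} to kill the coordinates of $Ax-b$ along zero eigenvalues --- is essentially a re-derivation of the content of Lemma \ref{lemma: tau-exists}, which handles the same degeneracy by the same range argument. What your route buys is economy and self-containment: no auxiliary variable, no dual problem, no citation of a strong-duality theorem, and a clean split between $\varepsilon>0$ (Slater) and $\varepsilon=0$ (where Slater fails and you correctly fall back on the affine normal equations, justified by \ref{c: full-rank}). What the paper's route buys is the explicit dual problem itself and a treatment of both values of $\varepsilon$ in one argument. Two small repairs to make your write-up airtight: establish $\mu>0$ \emph{before} invoking the identity $\mu(\II+\mu AA^\top)^{-1}=(AA^\top+\tfrac{1}{\mu}\II)^{-1}$, which is vacuous at $\mu=0$; and in the $\varepsilon=0$ branch add the one-line observation that $1=\tau\|(AA^\top)^{-1}(Ax-b)\|$ has the unique positive root $\tau_x=1/\|(AA^\top)^{-1}(Ax-b)\|$ (well defined since $(AA^\top)^{-1}(Ax-b)\neq 0$ when $\|Ax-b\|>0$), so that the proposition's uniqueness claim for $\tau_x$ is verified in that case as well.
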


The final conditions \ref{c: coercive-bounded} and \ref{c: proper-feasible} ensure a solution exists and total duality holds, a key condition required by many operator splitting methods to establish convergence \cite{ryu2022large}. To describe our method, let $z^1 \in \bbR^n$ and $\alpha > 0$. We  construct sequences  $\{x^k\}$ and $\{z^k\}$ with the update at each index $k$ given by the  formulas
\begin{subequations}
    \begin{align} 
        x^{k} & = \begin{cases}\begin{array}{cl}
            z^k & \mbox{if}\ \|Az^k - b \|\leq \varepsilon, \\ 
            z^k - A^\top (AA^\top + \varepsilon\tau_{z^k} \II)^{-1}(Az^k - b) & \mbox{otherwise,}
        \end{array}\end{cases} \\ 
        z^{k+1} & = z^k + \prox_{\alpha f}(2x^{k} - z^k) - x^{k},
    \end{align}\label{eq: pp-iteration}\end{subequations}with $\tau_{z^k}$ defined as in Proposition \ref{prop: projection}.
    The iteration in (\ref{eq: pp-iteration}) is a special case of a more general scheme known as Douglas-Rachford splitting (DRS) \cite{eckstein1992douglas,lions1979splitting}, which has many uses (\textit{e.g.} finding the zero of a sum of monotone operators \cite{eckstein1992douglas,eckstein1989splitting},  feasibility problems \cite{lindstrom2021survey}, combinatorial optimization \cite{aragon2014recent}). Making use of prior DRS results, the following theorem justifies use of Algorithm \ref{alg: pp-eps=0} (the $\varepsilon = 0$ case) and Algorithm \ref{alg: pp} (the $\varepsilon\geq 0$ case).

\def\mainTheorem{
    If conditions \ref{c: CCP} to \ref{c: proper-feasible} hold, then the sequences $\{x^k\}$ and $\{z^k\}$ generated by (\ref{eq: pp-iteration}) converge, with $\{x^k\}$ converging to a solution of (\ref{eq: problem}). Moreover, $\|Ax^k - b\|\leq\varepsilon$ for all $k$.
}

\begin{theorem}[Convergence of PP]\label{thm: main-theorem} 
    \mainTheorem
\end{theorem}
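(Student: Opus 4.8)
The plan is to recognize the iteration (\ref{eq: pp-iteration}) as Douglas-Rachford splitting (DRS) applied to the pair $g_1 \triangleq \delta_\sC$ and $g_2 \triangleq f$, and then to invoke the standard DRS convergence theory \cite{eckstein1992douglas,bauschke2017convex,ryu2022large}. Since $\prox_{\alpha\delta_\sC} = \sP_\sC$, and since Proposition \ref{prop: projection} (whose hypotheses \ref{c: full-rank} and \ref{c: b-image} are in force) shows that the piecewise formula defining $x^k$ equals $\sP_\sC(z^k)$, the first line of (\ref{eq: pp-iteration}) reads $x^k = \prox_{\alpha g_1}(z^k)$ and the second reads $z^{k+1} = z^k + \prox_{\alpha g_2}(2x^k - z^k) - x^k$, which is exactly the DRS update for minimizing $g_1 + g_2 = \delta_\sC + f$ over $\bbR^n$. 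Minimizing this sum is precisely (\ref{eq: problem}). The feasibility claim needs no limiting argument: because $x^k = \sP_\sC(z^k) \in \sC$ for every $k$, we have $\|Ax^k - b\| \leq \varepsilon$ for all $k$.

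Next I would verify the hypotheses required by the DRS convergence theorem: (i) both functions are closed, convex, and proper; (ii) a minimizer of $g_1 + g_2$ exists; and (iii) total duality holds, i.e. the fixed-point set of the DRS operator is nonempty. For (i), $f$ is closed, convex, and proper by \ref{c: CCP}, and $\delta_\sC$ is closed, convex, and proper because $\sC$ is a closed convex set that is nonempty, the latter following from \ref{c: b-image}, which supplies a point with $Ax = b$ and hence $\|Ax - b\| = 0 \leq \varepsilon$. For (ii), if $\sC$ is bounded then $f$ attains its minimum over the compact set $\sC$ by lower semicontinuity, while if $f$ is coercive then $f + \delta_\sC$ is coercive and closed, convex, and proper; either branch of \ref{c: coercive-bounded} yields a minimizer.

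The crux is (iii), and this is where \ref{c: proper-feasible} enters. Each fixed point $z^*$ of the DRS operator yields, via $x^* \triangleq \prox_{\alpha g_1}(z^*)$, a solution of the optimality inclusion $0 \in \partial f(x^*) + \partial\delta_\sC(x^*)$, and conversely; hence the fixed-point set is nonempty precisely when this inclusion is solvable, equivalently when the subdifferential sum rule $\partial(f + \delta_\sC) = \partial f + \partial\delta_\sC$ holds at a minimizer. The standard constraint qualification guaranteeing this sum rule is $\mbox{ri}(\mbox{dom}(f)) \cap \mbox{ri}(\sC) \neq \emptyset$, and the main step is to check that \ref{c: proper-feasible} delivers a point in this intersection. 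When $\varepsilon > 0$, any $x$ with $\|Ax - b\| < \varepsilon$ lies in the interior of $\sC$ (by continuity of $x \mapsto \|Ax - b\|$ the strict inequality persists in a neighborhood), so $\mbox{ri}(\sC) = \mbox{int}(\sC) \ni x$; when $\varepsilon = 0$, the set $\sC = \{x : Ax = b\}$ is an affine subspace and equals its own relative interior, so any $x \in \sC$ works. In both regimes \ref{c: proper-feasible} places the witness point in $\mbox{ri}(\mbox{dom}(f)) \cap \mbox{ri}(\sC)$, the sum rule holds, and combined with the minimizer from (ii) the inclusion is solvable, giving a nonempty fixed-point set.

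Finally, with (i)--(iii) in hand, I would apply the DRS convergence theorem to conclude that $\{z^k\}$ converges to a fixed point $z^*$ of the DRS operator. Since $\sP_\sC = \prox_{\alpha g_1}$ is firmly nonexpansive, and hence continuous, $x^k = \sP_\sC(z^k) \to \sP_\sC(z^*) \triangleq x^*$, and the fixed-point characterization identifies $x^*$ as a minimizer of $g_1 + g_2$, i.e. a solution of (\ref{eq: problem}). This establishes convergence of both sequences and optimality of the limit of $\{x^k\}$. I expect the principal obstacle to be the careful handling of step (iii): correctly describing $\mbox{ri}(\sC)$ in the separate regimes $\varepsilon = 0$ and $\varepsilon > 0$ and confirming that \ref{c: proper-feasible} supplies the constraint qualification needed for total duality.
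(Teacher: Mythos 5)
Your proposal is correct and follows essentially the same route as the paper: both identify (\ref{eq: pp-iteration}) as Douglas-Rachford splitting on the pair $(\delta_\sC, f)$ via Proposition \ref{prop: projection}, establish existence of a solution from \ref{c: coercive-bounded}, use \ref{c: proper-feasible} as the constraint qualification giving the subdifferential sum rule (total duality), invoke the standard DRS convergence theorem, and obtain feasibility from $x^k = \sP_\sC(z^k) \in \sC$. The only difference is one of detail: you spell out the verification that \ref{c: proper-feasible} yields $\mbox{ri}(\mbox{dom}(f)) \cap \mbox{ri}(\sC) \neq \varnothing$ in the two regimes $\varepsilon = 0$ and $\varepsilon > 0$, whereas the paper asserts this and delegates the supporting steps to results in \cite{bauschke2017convex}.
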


A proof of Theorem \ref{thm: main-theorem} is provided in Appendix \ref{app: proofs}. The rest of this section considers per-iteration costs of PP.

\begin{remark}[Computation of $\tau$]
    Explicit formulas can sometimes be derived for $\tau_x$ in Line \ref{alg: pp-tau} of Algorithm \ref{alg: pp} (\textit{e.g.} see Subsections \ref{subsec: spcp} and \ref{subsec: mc}).
    Otherwise, $\tau_x$ may be computed via a 1D solver (\textit{e.g.} bisection method).
    Note $0 \leq \tau_x \leq  \sigma_{max}(A)^2/(\|Ax-b\|-\varepsilon)$, with $\sigma_{max}(A)$ the largest singular value of $A$ (see  Lemma \ref{lemma: tau-exists}).
\end{remark}

The update formulas for PP and standard alternatives involve a proximal operation and matrix multiplications. The  costs  associated with multiplication by $A$ are $\sO(mn^2)$ and for $A^\top$ they are $\sO(m^2n)$.
Unlike most schemes we compare to, PP includes multiplication by $(AA^\top + \varepsilon\tau_x \II)^{-1}\in\bbR^{m\times m}$. In the worst case, this adds $\sO(m^3)$ cost.
Computing the matrix $(AA^\top + \varepsilon\tau_x \II)^{-1}$ can also  add   $\sO(m^3)$ cost. However, with certain structures of $A$, both of these costs can be reduced to $\sO(m)$, yielding little impact. 
The numerical examples below investigate whether the combination of per-iteration cost and convergence rate of PP is more efficient than alternatives.

\begin{remark}[Inversion with $\varepsilon=0$]
    In the case that $\varepsilon = 0$, there is a one-time computation of $(AA^\top)^{-1}$ that has cost $\sO(m^3)$. Ammortizing this over hundreds of iterations yields negligible per-iteration cost. Moreover, in some applications the same matrix $A$ is repeatedly used with new measurement data $b$, in which case $(AA^\top)^{-1}$ may be computed in an offline setting. 
\end{remark}

\begin{remark}[SVD Inverse]
    If $USV^\top$ is the singular value decomposition (SVD) of $A$, with $\Sigma = \mbox{diag}(\sigma_i)$, then  
    \begin{equation} 
        A^\top (AA^\top + \varepsilon\tau \II)^{-1} = V\, \mbox{diag}\left(\frac{\sigma_i}{\sigma_i^2 + \varepsilon\tau}\right) U^\top.
    \end{equation}
    Thus, if the SVD of $A$ is available, then the matrix inverse in Line \ref{alg: pp-x-update} of Algorithm \ref{alg: pp} is readily available too.
\end{remark}

\begin{remark}[Inversion of Tridiagonal Matrices] When the matrix $A$ has tridiagonal structure, Thomas' algorithm \cite{thomas1949elliptic} can be used to multiply $(AA^\top)^{-1}$ with $\sO(m)$ cost rather than the $\sO(m^3)$ cost via Gaussian elimination.  
    \label{remark: thomas-alg}
\end{remark}

\section{Numerical Examples}

\def\placeholder{
    \begin{tikzpicture}
        \fill[black!20!white] (0,0) rectangle++ (5,3) node[midway] {placeholder};
    \end{tikzpicture} 
} 

We provide four numerical examples. In each setting, conditions \ref{c: CCP} to \ref{c: proper-feasible} hold and, when applicable,  simplifications of Algorithm \ref{alg: pp} and Proposition \ref{prop: projection} are presented. Shown methods may be accelerated (including PP) to yield better results than shown; for simplicity of comparison, we restrict attention to unaccelerated variants.
Code\footnote{See Python source code on Github: \href{https://github.com/TypalAcademy/proximal-projection-algorithm}{github.com/TypalAcademy/proximal-projection-algorithm}.}  was run on a Macbook with an Apple M1 Pro chip and 16 GB RAM. \\[-22pt]

\subsection{Basis Pursuit}

In the field of compressed sensing, the aim is to recover a sparse signal $x^\star \in\bbR^n$ via a collection of linear measurements $b\in\bbR^m$ (see the survey \cite{qaisar2013compressive}).  If $m < n$ and the matrix $A\in\bbR^{m\times n}$ defining the measurements satisfies certain conditions (\textit{e.g.} restricted isometry \cite{candes2005decoding,candes2006near}) the signal $x^\star$ is often the solution to the problem \\[-8pt]
\begin{equation}
    \min_{x\in\bbR^{n}} \|x\|_1 
    \quad \mbox{s.t.}\quad   
    Ax = b.
    \label{eq: problem-bp}
    \tag{BP} 
\end{equation}
Here we set the matrix $A$ to have i.i.d. Gaussian entries, $m=500$, and $n=2000$. Elements of $x^\star$ are independently nonzero with probability $p=0.05$ and the nonzero values are i.i.d. Gaussian. Algorithm \ref{alg: pp-eps=0} is used with $f(x) = \|x\|_1$, for which \\[-13pt]
\begin{equation} 
    \prox_{\alpha \|\cdot\|_1}(x) 
    = \mbox{shrink}(x,\ \alpha)
    \triangleq \mbox{sgn}(x) \odot \max\{ |x| - \alpha, 0  \},
    \label{eq: shrink-1}
\end{equation} 
where $\odot$ is the element-wise product and $\mbox{sgn}$ is the sign function with value $1$ for positive input, $-1$ for negative input, and $0$ otherwise. 
Further setup details are described in Appendix \ref{app: bp}.\\[-5pt]
 
We compare PP against similar first-order methods: Linearized Bregman (LB) \cite{cai2009linearized,yin2010analysis,osher2010fast}, Primal Dual Hybrid Gradient (PDHG) \cite{chambolle2011first}, and the Linearized Method of Multipliers\footnote{This is also referred to as an instance of the proximal augmented Lagrangian method.} (LMM) \cite{ryu2022large}.  
Figure \ref{fig: plots-bp}a shows PP is feasible at each iteration (to machine precision) while the benchmark methods reduce iteratively reduce constraint violation. The other plots in Figure \ref{fig: plots-bp}b and \ref{fig: plots-bp}c show PP linearly converges to machine precision within hundreds of iterations while the other methods exhibit sublinear convergence. The mean time for 10 trials of PP, LB, PDHG, and LMM to compute 2,000 iterations were, respectively, \bpTimePP s, \bpTimeLB s, \bpTimePDHG s, and \bpTimeLMM s. Specifically, PP has comparable per-iteration cost to LB and less per-iteration cost than PDHG and LMM.  Thus, in this example, PP is fastest (converging in finite steps) and the only method to achieve feasibility. 
  
\begin{figure}[H] 
    \centering 
    \subfloat[Violation $\|Ax^k-b\|$]{\includegraphics[page=1]{./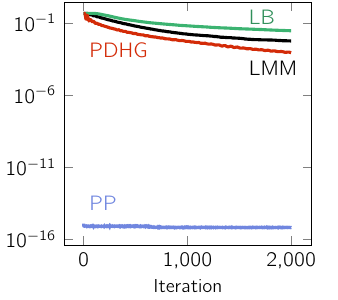}}       
    \subfloat[Objective $\|x^k\|_1$]{\includegraphics[page=2]{./images/basis-pursuit/bp-plots.pdf}}
    \subfloat[Residual $\|x^{k+1}-x^k\|$]{\includegraphics[page=3]{./images/basis-pursuit/bp-plots.pdf}}     
 
    \caption{ 
    Basis pursuit (\ref{eq: problem-bp}) plots, 
    using median for 10 samples of $A$ and $x^\star$.
    Each PP iterate is feasible, as shown in (a). Note LB and PDHG update other variables for several steps before $\|x^{k+1}-x^k\|\neq 0$.}
    \label{fig: plots-bp}
\end{figure}

\newpage

\begin{figure}[t] 
    \centering
    \subfloat[Original Image $M$]{\includegraphics[width=0.32\textwidth]{\directory/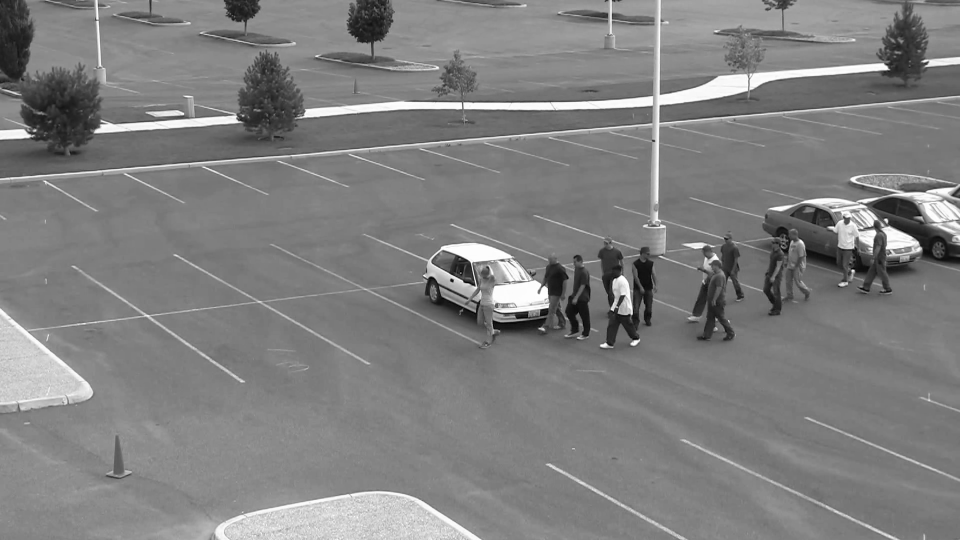}}
    \hspace*{5pt}
    \subfloat[Low Rank $L$]{\includegraphics[width=0.32\textwidth]{\directory/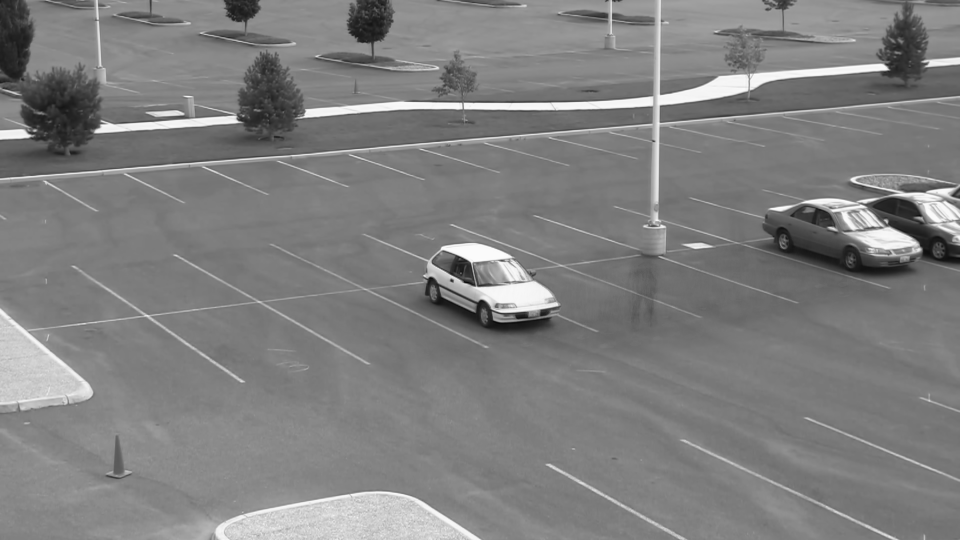}}
    \hspace*{5pt}
    \subfloat[Sparse $S$]{\includegraphics[width=0.32\textwidth]{\directory/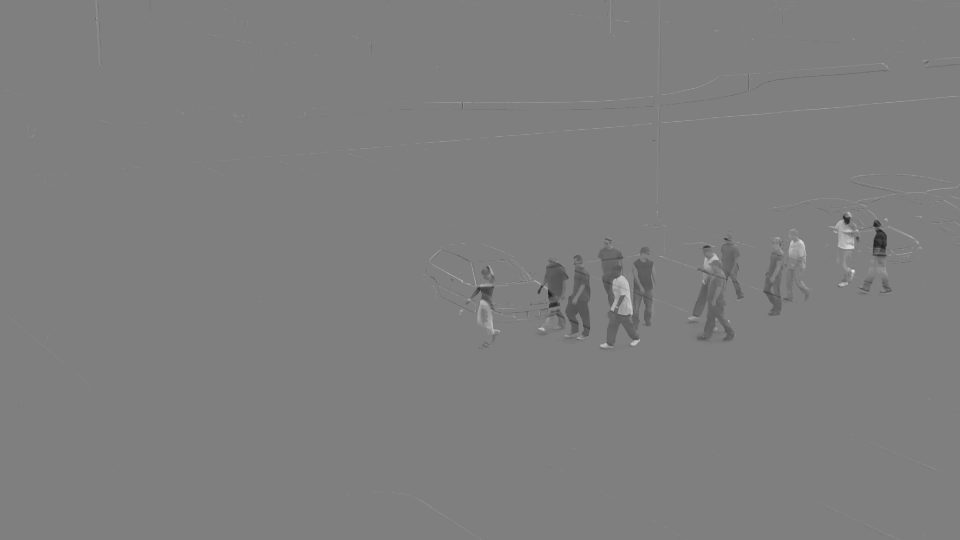}}
    \caption{Example output from PP  (Algorithm \ref{alg: pp-spcp}) with $M$ as a video consisting of 250 grayscale 960$\times$540 images (each image forming a column of $M$). The video shows a group of people entering from the right and walking to the left. The matrix $M$ is approximately represented as the sum  of a low rank matrix $L$ (background) and a sparse matrix $S$ (people walking).}
    \label{fig: PCP-images} 
    \vspace*{10pt}
\end{figure}

\subsection{Stable Principal Component Pursuit}  \label{subsec: spcp}

Many applications with high dimensional data (\textit{e.g.} image alignment \cite{peng2012rasl},   hyperspectral image restoration \cite{zhang2013hyperspectral}, scene triangulation \cite{zhang2012tilt}) consider the problem of recovering a low-rank matrix (\textit{i.e.} the principal components) from a high-dimensional matrix that has sparse errors and entry-wise noise.  
For a matrix $M\in \bbR^{n_1\times n_2}$, the task we consider is finding a corresponding low-rank matrix $L \in \bbR^{n_1\times n_2}$ and a sparse matrix $S \in \bbR^{n_1\times n_2}$
such that 
\begin{equation}
    M = L + S + N, 
\end{equation}
where $N$ is a noise term, for which we assume $\|N\|_F \leq \varepsilon$.
Following \cite{zhou2010stable} who proved the effectivenss of this model, we estimate $L$ and $S$ via
\begin{equation}
    \min_{L,S}\ \|L\|_\star + \lambda \|S\|_1 
    \quad \mbox{s.t.}\quad 
    \| L + S - M \|_F \leq \varepsilon,
    \label{eq: problem-spcp} 
    \tag{SPCP}
\end{equation} 
where $\lambda = 1/\sqrt{m}$ (as chosen in \cite{candes2011robust}).
Letting $U \Sigma V^\top$ be the SVD for $X$, the proximal operator for the nuclear norm $\|\cdot\|_\star$ is the singular value threshold (see \cite{cai2010singular}):
\begin{equation}
    \mbox{svt}(X,\alpha)
    \triangleq \prox_{\alpha \|\cdot\|_\star}(W) 
    = U \, \mbox{diag}(\max\{\sigma_i - \alpha,\ 0\})\, V^\top.
    \label{eq: svt} 
\end{equation} 
Rather than compute an SVD, the threshold $\mbox{svt}(\cdot,\alpha)$ may also be computed more quickly without an SVD by using a polar decomposition and projection \cite{cai2013fast}.
In the framing of (\ref{eq: problem}), here the matrix $A$ takes the form $A = [\II\ \II]$, where we set $X = [X_L; X_S]$.  This simple structure enables the projection formula in Proposition \ref{prop: projection} to admit a simple, explicit expression, as outlined by the following lemma. (See Appendix \ref{app: spcp} for a proof.)

\def\lemmaProjectionSPCP{
    If $\sC \triangleq \{ X = [X_L; X_S] : \|X_L + X_S - M\|_F \leq \varepsilon\}$, then 
    \begin{equation} 
        \sP_\sC(Z)
        = 
        \left[ 
            \begin{array}{c}
                Z_L - \mu_Z (Z_L + Z_S - M) \\ 
                Z_S - \mu_Z (Z_L + Z_S - M)
            \end{array}
        \right],
    \end{equation}
    where\footnote{We adopt the convention of using $-\varepsilon/0 = -\infty$.}
    \begin{equation}
        \mu_Z 
        \triangleq 
        \max\left\lbrace 
        0,\ \frac{\|Z_L + Z_S - M\|_F - \varepsilon}{2\|Z_L + Z_S - M\|_F} 
        \right\rbrace .
    \end{equation}
}

\begin{lemma}[SPCP Projection]
    \label{lemma: projection-spcp}
    \lemmaProjectionSPCP
\end{lemma}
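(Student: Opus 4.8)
The plan is to apply Proposition \ref{prop: projection} directly to the structured data $A = [\II\ \II]$ and $b = M$, where $A$ acts on the stacked variable $Z = [Z_L; Z_S]$ via $AZ = Z_L + Z_S$ and the Euclidean norm on the stacked space coincides with the Frobenius norm (formally, after vectorizing each block). Before invoking the proposition, I would verify its hypotheses \ref{c: full-rank} and \ref{c: b-image} for this $A$: since $[\II\ \II]$ contains an identity block it has full row rank, and it is surjective, so $M = b$ lies in its image. Hence Proposition \ref{prop: projection} applies verbatim, and the task reduces to simplifying the resulting expression for this particular $A$.

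The key computational step is to exploit the identity $AA^\top = \II\II^\top + \II\II^\top = 2\II$, which collapses the matrix inverse into a scalar, namely $(AA^\top + \varepsilon\tau\II)^{-1} = (2 + \varepsilon\tau)^{-1}\II$. I would then substitute this into the scalar equation $1 = \tau\|(AA^\top + \varepsilon\tau\II)^{-1}(AZ - b)\|$ defining $\tau_Z$, which reduces to $2 + \varepsilon\tau = \tau\|Z_L + Z_S - M\|_F$. In the nontrivial regime $\|Z_L + Z_S - M\|_F > \varepsilon$ this is linear in $\tau$ and solves explicitly, and a short rearrangement shows $(2 + \varepsilon\tau_Z)^{-1} = (\|Z_L + Z_S - M\|_F - \varepsilon)/(2\|Z_L + Z_S - M\|_F)$, which is exactly $\mu_Z$.

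Finally, I would substitute back into the projection formula. Writing $R \triangleq Z_L + Z_S - M$ for the residual, the correction term is $A^\top(AA^\top + \varepsilon\tau_Z\II)^{-1}(AZ - b) = \mu_Z A^\top R = [\mu_Z R;\ \mu_Z R]$, since $A^\top = [\II;\ \II]$; subtracting this from $Z = [Z_L; Z_S]$ yields the stated formula. To close the argument, I would check the two boundary regimes handled by the case split of Proposition \ref{prop: projection}: when $\|R\|_F \leq \varepsilon$ the point is feasible and the proposition returns $Z$, consistent with $\mu_Z = 0$ coming from the outer $\max\{0, \cdot\}$; and when $\|R\|_F = 0$ the stated convention $-\varepsilon/0 = -\infty$ again forces $\mu_Z = 0$, again matching feasibility. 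The only mild subtlety—rather than a genuine obstacle—is reconciling the single closed-form $\mu_Z$ with the piecewise proposition, and the $\max\{0,\cdot\}$ together with the division convention are precisely what absorb the feasible and degenerate cases into one expression.
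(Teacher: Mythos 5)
Your proposal is correct and follows essentially the same route as the paper's own proof: both invoke Proposition \ref{prop: projection}, collapse the inverse via $AA^\top = 2\II$, solve the resulting scalar equation for $\tau_Z$ explicitly, and substitute back to recover $\mu_Z$ and the stated formula. Your additional verification of hypotheses \ref{c: full-rank} and \ref{c: b-image} and the explicit check of the degenerate case $\|Z_L + Z_S - M\|_F = 0$ are fine refinements the paper leaves implicit.
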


Using the shrink in (\ref{eq: shrink-1}) and singular value threshold in (\ref{eq: svt}) along with the projection formula in Lemma \ref{lemma: projection-spcp}, a special case of Algorithm \ref{alg: pp} for the problem (\ref{eq: problem-spcp}) is given by Algorithm \ref{alg: pp-spcp} below. 

\begin{remark}[Similarity of PP to Proximal Gradient]  
    When $\varepsilon = 0$ and the constraint in (\ref{eq: problem-spcp}) is moved into the objective as a quadratic penalty, 
    the sequence of estimates generated by proximal gradient for this softly constrained version are identical (for a particular stepsize) to the sequence $\{Z^k\}$ generated by PP.  
    The difference is PP defines solution estimates as the projection of $Z^k$ onto the set of all $X$ satisfying $X_L + X_S= M$.
\end{remark}

\begin{algorithm}[t]
    \caption{{\tt Proximal Projection for Stable Principal Component Pursuit (PP-SPCP)}}
    \label{alg: pp-spcp}
    \setstretch{1.5}
    \def\tab{\hspace*{15pt}}
    \begin{algorithmic}[1]
        \STATE{
            \texttt{PP-SPCP}($M$, $\lambda$, $\varepsilon$):
        }
        \STATE{
            \tab {\bf initialize}  parameter $\alpha > 0$, matrices $Z _L \leftarrow M$,\  $Z_S \leftarrow 0$, $X_L \leftarrow M$, $X_S \leftarrow 0$
        }
        \STATE{
            \tab {\bf while} stopping criteria not met
        }

        \STATE{
            \tab\tab $Z_L \leftarrow Z_L + \mbox{svt}\left( 2X_L - Z_L, \alpha\right) - X_L$
            \hfill $\vartriangleleft$ Use (\ref{eq: svt}) or method in \cite{cai2013fast}
        }
        \STATE{
            \tab\tab $Z_S \leftarrow Z_S + \mbox{shrink}\left( 2X_S -Z_S ,\ \alpha\lambda\right)  - X_S $
            \hfill $\vartriangleleft$ Use (\ref{eq: shrink-1})
        }        
        \STATE{
            \tab\tab $\mu \leftarrow \max\left\lbrace \frac{\|X_L + X_S-M\|_F - \varepsilon}{2 \|X_L + X_S - M\|_F},\ 0 \right\rbrace $
            \label{alg: pp-spcp-mu}
            \hfill $\vartriangleleft$ Treat $-\varepsilon/0$ as $-\infty$
        }
        \STATE{
            \tab\tab $X_L \leftarrow Z_L - \mu (Z_L + Z_S - M)$
        }
        \STATE{
            \tab\tab $X_S \leftarrow Z_S - \mu (Z_L + Z_S - M)$
        }

        \STATE{
            \tab {\bf return} $X_L$ and  $X_S$
        }
    \end{algorithmic} 
    \end{algorithm}

\begin{remark}[Proximal Gradient and SPCP]
   \label{remark: pg-problem}
   Prior work (\textit{e.g.} \cite{zhou2010stable}) uses proximal gradient (PG) for (\ref{eq: problem-spcp}), with the constraint moved into the objective via a quadratic penalty. The weight of this penalty is important. If it is too large, then the constraint is satisfied in (\ref{eq: problem-spcp}) and the objective for PG is suboptimal. If it is too small, then the constraint is not satisfied. For a particular weight, dependent on $M$ and $\varepsilon$, PG is well-known to solve (\ref{eq: problem-spcp}).
    However, we note PG is is generally not apt  when it is unclear how to pick the penalty weight. 
\end{remark}

We compare PP to a variant of alternating splitting augmented Lagrangian method (VASALM) \cite{tao2011recovering} and a ``partially smoothed'' proximal gradient\footnote{In \cite{aybat2014efficient}, FISTA accleration was used; however, as noted above, unaccelerated variants are compared in this work.} (PSPG) on a variation (\ref{eq: problem-spcp-smoothed}) of (\ref{eq: problem-spcp}) wherein the nuclear norm is smoothed (via a tunable parameter $\mu$); this smoothing was proposed in \cite{aybat2014efficient}. 
Note VASALM, PSPG, and PP have essentially identical per-iteration costs (dominated by $\mbox{svt}(\cdot)$).
Here $M$ encodes a sequence of images in a video derived from the PNNL Parking Lot 1 dataset \cite{pnnl2012parking}, with each image vectorized to form a column of $M$ (see Figure \ref{fig: PCP-images}).
Figure \ref{fig: plots-spcp}a shows PP and PSPG\footnote{Constraint violation for PSPG (beyond floating point precision) occurs due to inexactly solving a 1D root-finding problem.} are feasible at each iteration while VASALM approaches feasibility asymptotically.
Due to this feasibility, objective values for PP and PSPG are bounded from below by the optimal value. On the other hand, VASALM oscillates around the optimal value. Figures \ref{fig: plots-spcp}b and \ref{fig: plots-spcp}c shows update residuals and objective values of PP and VASALM have comparable convergence speed, with slight favor toward PP. Meanwhile, PSPG is notably slower. 
Overall, PP  performs at least as well as VASALM and PSPG.
 
\begin{figure}[H]
    \centering  
    \vspace*{-5pt}
    \subfloat[Violation $\max\{ \|X_L^k + X_S^k - M\|_F-\varepsilon,0\} / \varepsilon$]{
        \hspace*{5pt}
        \includegraphics[page=1]{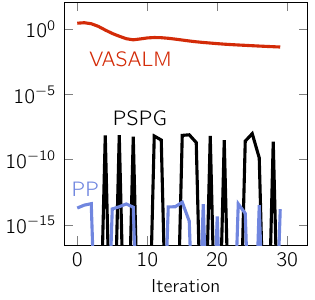} 
        }    
    \subfloat[Objective   $\|L^k\|_\star + \lambda \|S^k\|_1$]{ 
        \includegraphics[page=2]{images/stable-principal-component-pursuit/spcp-plots.pdf}
        \hspace*{5pt} 
        } 
    \subfloat[Residual $\|X^{k+1} - X^k\|_F/\|M\|_F$]{ 
        \includegraphics[page=3]{images/stable-principal-component-pursuit/spcp-plots.pdf} 
        }  

    \caption{Convergence comparison for PP, VASALM and PSPG for (\ref{eq: problem-spcp}). Each PP estimate is feasible to machine precision and PSPG estimates are feasible to accuracy of 1D root-finding solver.
    }
    \label{fig: plots-spcp} 
\end{figure}

\newpage

\begin{algorithm}[H]
    \caption{{\tt Proximal Projection for Computation of Earth Mover's Distance (PP-EMD)}}
    \label{alg: pp-emd}
    \setstretch{1.45}
    \def\tab{\hspace*{15pt}}
    \begin{algorithmic}[1]
        \STATE{
            \texttt{PP-EMD}($\rho^0$, $\rho^1$):
        }
        \STATE{
            \tab {\bf initialize}   matrix $z \in\bbR^{2(n-1)\times n}$ and parameter $\alpha > 0$ 
        }
        \STATE{
            \tab {\bf while} stopping criteria not met
        }

        \STATE{
            \tab\tab {\bf if} $\|\mbox{div}(z) + \rho^1 - \rho^0 \|_F \leq \varepsilon$
        }
        \STATE{
            \tab\tab\tab $m\leftarrow z$
        }        
      
        \STATE{
            \tab\tab {\bf else}
        }
        \STATE{
            \tab\tab\tab $\tau \leftarrow \mbox{solution}\left( 1 = \tau \|Y_\tau\|_F \right)$
            \hfill $\vartriangleleft$ Use root-finding algorithm and $Y_\tau$ in Lemma \ref{lemma: projection-emd}
        }
        \STATE{
            \tab\tab\tab $m \leftarrow z - K^\top U\, [Y_\tau ;\, Y_\tau^\top ]\, U^\top$
            \hfill $\vartriangleleft$ Use $U$ and $Y_\tau$ as in Lemma \ref{lemma: projection-emd}
        }          

        \STATE{
            \tab\tab $z \leftarrow z + \mbox{shrink}(2m - z, \alpha)  - m$
            \hfill $\vartriangleleft$ Use (\ref{eq: shrink-1})  
        }
        \STATE{
            \tab {\bf return} $m$
        }
    \end{algorithmic}
    \end{algorithm}

\subsection{Earth Mover's Distance} \label{subsec: emd}
The earth mover's distance (EMD) is a key metric that is widely used in several fields (\textit{e.g.} image processing \cite{boltz2010earth}, statistics \cite{panaretos2019statistical}, image retrieval \cite{rubner2000earth}, seismology \cite{engquist2014application,metivier2016measuring}, machine learning \cite{tolstikhin2017wasserstein}). This numerical example shows PP can be an efficient method for estimation of EMD, outperforming some existing alternatives. \\

Consider two distributions $\rho^0$ and $\rho^1$, which are here represented as matrices in $\bbR^{n\times n}$. Let $m^1$ and $m^2$ together denote components of a 2D flux. The divergence operator is here defined for a grid size $h > 0$ by\\
\begin{equation}
    \mbox{div}(m) 
    \triangleq \frac{1}{h} \left(m^1_{i,j} - m_{i-1,j}^1 + m_{i,j}^2 - m_{i,j-1}^2\right).
\end{equation}

Following similarly to \cite{li2018parallel}, we estimate the earth mover's distance\footnote{PP immdiately extends to other distances (\textit{e.g.} L2).  To keep experiments concise, we restrict scope to L1.} (\textit{i.e.} Wasserstein-1) between $\rho^0$ and $\rho^1$ as the optimal objective value for the problem \\[-7pt]
\begin{equation} 
    \min_{m} \|m\|_{1}     
    \quad \mbox{s.t.}\quad 
    \|\mbox{div}(m) + \rho^1 - \rho^0 \|_F \leq \varepsilon,
    \label{eq: problem-emd}
    \tag{EMD$_\varepsilon$}
\end{equation}  
where $\varepsilon \geq 0$ is small. (See the note in Remark \ref{remark: emd-eps}  below about $\varepsilon$.)
Neumann boundary conditions are enforced by taking $m_{0,j}^1 = m_{i,0}^2 = m_{n,j}^2 = m_{i,n}^2 = 0$ for all $i,j=1,2,\ldots,n$.  
To write the divergence in matrix form, let $K$ be the backward differencing operator, \textit{i.e.} \\[-4pt]
\begin{equation}
    K \triangleq \frac{1}{h} \left[\begin{array}{rrrrr}
        1 &  \\ 
        -1 & 1 &  \\ 
        &   \ddots & \ddots \\  
        & &   -1 & 1  \\
         & & &    -1 \\  
    \end{array}\right] \in \bbR^{n\times (n-1)}. 
\end{equation}
By eliminating flux entries that are zero due to boundary conditions, we may set $m = [m^1; (m^2)^\top] \in \bbR^{2(n-1)\times n}$.
Thus, application of the divergence and its transpose can be expresed via\footnote{The notation with left multiplication by $A$ and $A^\top$ is somewhat ``abusive'' as right multiplications and transposes are included.}
\begin{equation}
    Am 
    = \mbox{div}(m) 
    = K m^1 + m^2 K^\top \in \bbR^{n\times n}
    \quad\mbox{and}\quad 
    A^\top b 
    = \left[\begin{array}{c} K^\top b \\ (bK)^\top \end{array}\right] 
    = \left[\begin{array}{c} K^\top b \\ K^\top b^\top \end{array}\right] \in \bbR^{2(n-1)\times n}.
    \label{eq: emd-A-operation}
\end{equation}

With this notation established, the projection formula in Proposition \ref{prop: projection} can be rewritten for this setting  as below (see Appendix \ref{app: emd} for a proof).
Importantly, we note the divergence operator here does \textit{not} have full row rank; the fact $\varepsilon > 0$ is what ensures the needed matrix inversion is possible.

\def\lemmaProjectionEMD{
    If $\sC \triangleq \{ m : \|\mbox{div}(m) + \rho^1 - \rho^0 \|_F \leq \varepsilon\}$ and
    $U\Sigma V^\top$ is the SVD for $K$, then 
   \begin{equation}
       \sP_{\sC}(z) 
       = 
       \begin{cases}
       \begin{array}{cl}
           z & \mbox{if}\ \|\mbox{div}(z) + \rho^1 - \rho^0\|_F \leq \varepsilon \\ 
           z - K^\top U  \left[ \begin{array}{c}  Y_\tau   
           \\ Y_\tau^\top   \end{array}\right] U^\top
           & \mbox{otherwise,}
       \end{array} 
       \end{cases}
   \end{equation}
   where $\tau > 0$ satisfies   
   $
       1 = \tau\| (Y_\tau)\|_F 
   $ and $Y_\tau$ is the matrix defined element-wise via 
   \begin{equation}
       (Y_\tau)_{ij}
       \triangleq \frac{[U^\top (Kz+zK^\top + \rho^1 - \rho^0)U]_{ij}}{\sigma_i^2 + \sigma_j^2 + \varepsilon\tau}, 
       \quad\mbox{for all}\ i,j  \in [n].
   \end{equation} 
}

\begin{lemma}[EMD Projection]
    \label{lemma: projection-emd}
    \lemmaProjectionEMD
\end{lemma}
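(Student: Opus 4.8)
The plan is to apply Proposition \ref{prop: projection} directly, with the abstract data identified as $Am = \mbox{div}(m) = Km^1 + m^2 K^\top$ and $b = \rho^0 - \rho^1$, so that $Am - b = \mbox{div}(m) + \rho^1 - \rho^0$ and the constraint set matches the one in the lemma. With this identification, the entire task reduces to making the two operators appearing in Proposition \ref{prop: projection} — namely $(AA^\top + \varepsilon\tau\II)^{-1}$ and $A^\top$ — explicit for the divergence operator. The action of $A^\top$ is already recorded in (\ref{eq: emd-A-operation}), so the crux is computing the action of $AA^\top$ and inverting the shifted operator.

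First I would compute $AA^\top$. Using $A^\top Y = [K^\top Y;\, K^\top Y^\top]$ from (\ref{eq: emd-A-operation}) and feeding this back through $A$, a short calculation gives $AA^\top Y = KK^\top Y + Y KK^\top$; that is, $AA^\top$ is the symmetric Sylvester-type operator $Y \mapsto LY + YL$ with $L \triangleq KK^\top$. This is the key structural observation. Introducing the SVD $K = U\Sigma V^\top$ yields $L = U\Sigma\Sigma^\top U^\top = U\,\mbox{diag}(\sigma_i^2)\,U^\top$, so the \emph{same} orthogonal factor $U$ diagonalizes $L$ under both left and right multiplication. Changing coordinates via $\hat Y \triangleq U^\top Y U$ therefore turns the shifted operator into a purely element-wise scaling, $(AA^\top + \varepsilon\tau\II)Y = U\big(\mbox{diag}(\sigma_i^2)\hat Y + \hat Y\,\mbox{diag}(\sigma_j^2) + \varepsilon\tau\hat Y\big)U^\top$, whose $(i,j)$ entry equals $(\sigma_i^2 + \sigma_j^2 + \varepsilon\tau)\hat Y_{ij}$.

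With the operator diagonalized, inversion is immediate: solving $(AA^\top + \varepsilon\tau\II)Y = R$ for $R = \mbox{div}(z) + \rho^1 - \rho^0$ gives $\hat Y_{ij} = [U^\top R U]_{ij}/(\sigma_i^2 + \sigma_j^2 + \varepsilon\tau)$, which is exactly $(Y_\tau)_{ij}$, so $(AA^\top + \varepsilon\tau\II)^{-1}R = UY_\tau U^\top$. Here I would flag the subtlety the excerpt emphasizes: since $K$ is rank-deficient ($\sigma_n = 0$), the denominator of the $(n,n)$ entry is $\varepsilon\tau$, so well-posedness of the inverse genuinely requires $\varepsilon\tau > 0$, which holds because $\varepsilon > 0$ and $\tau > 0$. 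Substituting into Proposition \ref{prop: projection} and applying $A^\top$ to $UY_\tau U^\top$ produces the claimed $z - K^\top U[Y_\tau;\, Y_\tau^\top]U^\top$, while the defining equation for $\tau$ transfers because $U$ is orthogonal: $\|(AA^\top + \varepsilon\tau\II)^{-1}R\|_F = \|UY_\tau U^\top\|_F = \|Y_\tau\|_F$, so the scalar condition $1 = \tau\|(AA^\top+\varepsilon\tau\II)^{-1}(Az-b)\|$ becomes $1 = \tau\|Y_\tau\|_F$.

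The main obstacle is the single structural step of recognizing $AA^\top$ as the Sylvester operator $LY + YL$ and exploiting that the same orthogonal factor $U$ from the SVD of $K$ simultaneously diagonalizes both the left and right multiplications; once that is in hand, the remaining work — the element-wise inversion, the application of $A^\top$, and the norm bookkeeping for the $\tau$-equation — is routine.
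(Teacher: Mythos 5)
Your proposal is correct and follows essentially the same route as the paper's proof: both apply Proposition \ref{prop: projection}, identify $AA^\top$ as the Sylvester operator $Y \mapsto KK^\top Y + Y KK^\top$, diagonalize it via the orthogonal factor $U$ from the SVD of $K$ (change of coordinates $U^\top(\cdot)U$), invert element-wise to obtain $Y_\tau$, and transfer the scalar condition to $1 = \tau\|Y_\tau\|_F$ using orthogonal invariance of the Frobenius norm. The well-posedness point you flag (rank deficiency of $K$ compensated by $\varepsilon\tau > 0$) is the same one the paper notes via condition \ref{c: full-rank}.
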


\vspace*{-20pt}

\begin{figure}[H]   
    \centering   
    \subfloat[Standing Cat $\rho^0$]{\includegraphics[height=1.7in]{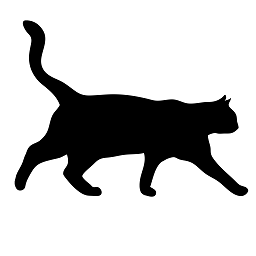}}
    \hspace*{15pt}
    \subfloat[Crouched Cat $\rho^1$]{\includegraphics[height=1.7in]{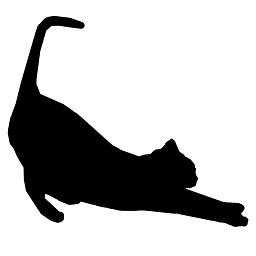}} 
    \hspace*{15pt}
    \subfloat[PP Flux Estimate $m^k$]{\includegraphics[height=1.8in]{\directory/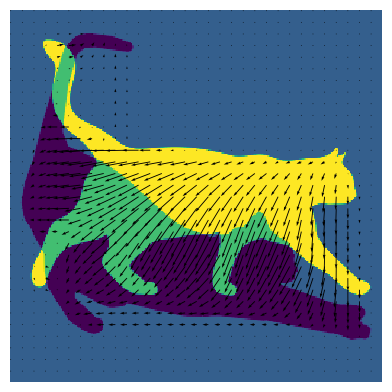}}
    \caption{The black portion of the cats gives the supports $\rho^0$ and $\rho^1$.  Arrows in (c) show the 2D flux $m^k$.}
    \label{fig: cat-images}
\end{figure} 
 
\begin{remark} 
    \label{remark: emd-eps}
    Setting $\varepsilon = 0$ gives the true EMD. 
    For this reason, here we refer to violation as $\|\mbox{div}(m^k) + \rho^1 - \rho^0\|_F$. However, we emphasize using PP with small $\varepsilon > 0$ yields lower violation than using PDHG with $\varepsilon = 0$. Although counterintuitive, this occurs since each iterate of PP is feasible while PDHG obtains feasibility asymptotically.
\end{remark}

We compare PP to PDHG \cite{li2018parallel} and G-Prox PDHG \cite{jacobs2019solving}.
Here, $\varepsilon = 10^{-10}$, and $\rho^0$ and $\rho^1$ are shown by cat images in Figure \ref{fig: cat-images}. (See Appendix \ref{app: emd} for details.) 
The updates for PP and G-Prox PDHG are closely related, which is reflected by significant overlap of their plots in Figure \ref{fig: plots-emd}.
Figure \ref{fig: plots-emd}a shows  PP has violation no more than $ \varepsilon$ for each iteration. Although the same $\varepsilon$ is used for projections onto divergence free vector fields in G-Prox PDHG, its violation grows as that scheme accumulates projection errors. Hence PP may better handle constraints than G-Prox PDHG. The times to run PP, G-Prox PDHG, and PDHG for 20,000 iterations were, respectively \emdTimePP s, \emdTimePDHG s, and \emdTimeGPDHG s (averaged over 10 trials). The per-iteration cost of PP   roughly equals that of G-Prox PDHG and \emdTimePPoverPDHG  X that of PDHG. Despite these higher per-iteration costs, PP and G-Prox PDHG are  more efficient than PDHG here as PDHG requires order(s) of magnitude more iterations to converge.

\begin{figure}[H] 
    \centering  
    \subfloat[Violation $\| \mbox{div}(m^k) + \rho^1 - \rho^0\|_F$]{\includegraphics[page=1]{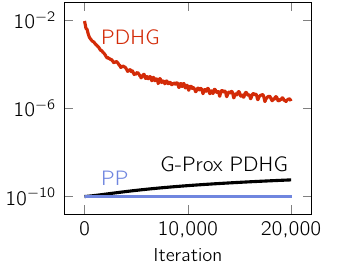}}    
    \hspace*{5pt}
    \subfloat[Objective   $\|m^k\|_{1}$]{\includegraphics[page=2]{images/earth-movers-distance/emd-plots.pdf}}
    \hspace*{5pt}
    \subfloat[Residual $\|m^{k+1} - m^k\|_F$]{\includegraphics[page=3]{images/earth-movers-distance/emd-plots.pdf}}
    \caption{Convergence comparison for PP, PDHG, and G-Prox PDHG for computation of earth mover's distance between cat images. PP and G-Prox PDHG have significantly overlapping plots in (b) and (c).}
    \label{fig: plots-emd}  
\end{figure}

\newpage
 
\subsection{Stable Matrix Completion} \label{subsec: mc}

A key task in many applications consists of filling in missing entries of a partially observed matrix. Forms of this problem arise in machine learning, system identification, recommendations systems, localization in IoT networks, image restoration, and more (\textit{e.g.} see \cite{ramlatchan2018survey,davenport2016overview,nguyen2019low} and the references therein). 
Although infinitely many matrices are consistent with partial observations (causing recovery to be ill-posed), a common approach is to find the matrix with minimal nuclear norm that is consistent with the observations.  
Indeed, the seminal work \cite{candes2012exact} shows many low-rank matrices $M \in \bbR^{n_1\times n_2}$ can be \textit{exactly} recovered by solving the convex problem
\begin{equation}
    \min_{X\in\bbR^{n_1\times n_2}} \|X\|_\star 
    \quad\mbox{s.t.}\quad 
    X_{ij} = M_{ij}
    \ \mbox{for all}\ (i,j)\in \Omega,
    \label{eq: problem-mc}
    \tag{MC}
\end{equation} 
where $\Omega \subseteq [n_1]\times [n_2]$ is the set of   observed indices. For settings with observations corrupted by Gaussian noise,  we consider the stable matrix completion variant\footnote{Note (\ref{eq: problem-smc}) and (\ref{eq: problem-spcp}) can both be expressed as special cases of a common, more general formulation.} proposed by \cite{candes2010matrix}:
\begin{equation}
    \min_{X\in\bbR^{n_1\times n_2}} \|X\|_\star 
    \quad\mbox{s.t.}\quad 
    \|\sP_\Omega(X-M) \|_F \leq \varepsilon.
    \label{eq: problem-smc}  
    \tag{SMC}  
\end{equation}
where we let $\sP_\Omega$ denote the projection onto the the span of matrices vanishing outside of $\Omega$ and $\sP_{\Omega^\perp}$ be the projection onto the complement of this space, \textit{i.e.}
\begin{equation}
    \sP_{\Omega}(X)_{ij}
    \triangleq \begin{cases}
        \begin{array}{cl}
            X_{ij} & \mbox{if}\ (i,j)\in \Omega \\ 
            0 & \mbox{otherwise}
        \end{array}
    \end{cases}
    \quad\mbox{and}\quad\quad
    \sP_{\Omega^\perp}(X)_{ij}
    \triangleq \begin{cases}
        \begin{array}{cl}
            X_{ij} & \mbox{if}\ (i,j)\notin \Omega \\ 
            0 & \mbox{otherwise.}
        \end{array}
    \end{cases}    
    \label{eq: proj-mc}
\end{equation}

The projection formula from Proposition \ref{prop: projection} simplifies in this setting as follows (see Appendix \ref{app: smc} for a proof).

\def\lemmaProjectionSMC{
    If $\sC \triangleq \{  X : \| \sP_\Omega(X-M) \|_F \leq \varepsilon \}$, then 
    \begin{equation}
        \sP_\sC(Z)  
        = \begin{cases}
            \begin{array}{cl}
                Z & \mbox{if}\ \| \sP_\Omega(Z-M) \|_F \leq \varepsilon  \\ 
                \sP_\Omega\left( \frac{[ \|\sP_\Omega(Z-M)\|_F - \varepsilon]M + \varepsilon Z}{\|\sP_\Omega(Z-M)\|_F}\right) + \sP_{\Omega^\perp}(Z) & \mbox{otherwise,}
            \end{array}
        \end{cases}
    \end{equation}
}

\begin{lemma}[SMC Projection]
    \label{lemma: projection-mc}
    \lemmaProjectionSMC
\end{lemma}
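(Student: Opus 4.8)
The plan is to reduce the projection onto $\sC$ to a projection onto a Frobenius ball, exploiting the fact that the constraint $\|\sP_\Omega(X-M)\|_F \le \varepsilon$ restricts only the part of $X$ supported on $\Omega$. Since $\sP_\Omega$ and $\sP_{\Omega^\perp}$ are complementary orthogonal projections with respect to the Frobenius inner product, every matrix decomposes as $X = \sP_\Omega(X) + \sP_{\Omega^\perp}(X)$, and the Pythagorean identity gives $\|X - Z\|_F^2 = \|\sP_\Omega(X) - \sP_\Omega(Z)\|_F^2 + \|\sP_{\Omega^\perp}(X) - \sP_{\Omega^\perp}(Z)\|_F^2$. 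Because linearity yields $\sP_\Omega(X - M) = \sP_\Omega(X) - \sP_\Omega(M)$, the constraint involves only the first summand.

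First I would observe that the two summands of the objective minimize independently. The $\sP_{\Omega^\perp}$ term is unconstrained, so it is driven to zero by choosing $\sP_{\Omega^\perp}(X) = \sP_{\Omega^\perp}(Z)$; this produces the $\sP_{\Omega^\perp}(Z)$ contribution in the stated formula. The remaining problem is to minimize $\tfrac12\|Y - \sP_\Omega(Z)\|_F^2$ over matrices $Y$ in the range of $\sP_\Omega$ subject to $\|Y - \sP_\Omega(M)\|_F \le \varepsilon$. Since both $\sP_\Omega(Z)$ and $\sP_\Omega(M)$ lie in that subspace, this is exactly the projection of $\sP_\Omega(Z)$ onto the Frobenius ball of radius $\varepsilon$ centered at $\sP_\Omega(M)$.

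Next I would apply the standard ball-projection formula: writing $r = \|\sP_\Omega(Z) - \sP_\Omega(M)\|_F = \|\sP_\Omega(Z - M)\|_F$, the projection equals $\sP_\Omega(Z)$ when $r \le \varepsilon$ (giving the first case, which combined with $\sP_{\Omega^\perp}(Z)$ recovers $Z$ itself), and equals $\sP_\Omega(M) + (\varepsilon/r)\big(\sP_\Omega(Z) - \sP_\Omega(M)\big)$ when $r > \varepsilon$. The final step is algebraic simplification: collecting the coefficients of $\sP_\Omega(M)$ and $\sP_\Omega(Z)$ and invoking linearity of $\sP_\Omega$ rewrites this as $\sP_\Omega\!\big(\tfrac{(r-\varepsilon)M + \varepsilon Z}{r}\big)$, matching the claimed expression.

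This argument is largely routine; the only point requiring care is justifying that the constrained subproblem really is a ball projection inside the subspace $\mathrm{range}(\sP_\Omega)$ rather than in the full matrix space, so that no component outside $\Omega$ is perturbed. An equivalent route is to specialize Proposition \ref{prop: projection}: identifying $A$ with the operator selecting the observed entries gives $AA^\top = \II$, so the scalar equation for $\tau_x$ solves explicitly as $\tau_x = 1/(r - \varepsilon)$, and the formula $x - A^\top(AA^\top + \varepsilon\tau_x\II)^{-1}(Ax - b)$ collapses to $Z - \tfrac{r-\varepsilon}{r}\sP_\Omega(Z - M)$, the same result.
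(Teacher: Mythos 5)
Your proposal is correct, but your primary argument is genuinely different from the paper's. The paper proves the lemma by specializing Proposition \ref{prop: projection}: it identifies $A = A^\top = \sP_\Omega$ (so $AA^\top = \sP_\Omega$, \emph{not} the identity), solves the operator equation $(\sP_\Omega + \varepsilon\tau_X \II)Y = \sP_\Omega(Z-M)$ entry-wise to get $Y_{ij} = (Z_{ij}-M_{ij})/(1+\varepsilon\tau_X)$ on $\Omega$ and $0$ off $\Omega$, then solves the scalar condition $1 = \tau_X\|Y\|_F$ to find $\tau_X = 1/(\|\sP_\Omega(Z-M)\|_F - \varepsilon)$, and substitutes back. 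Your main route instead never invokes Proposition \ref{prop: projection}: you split the objective via the Pythagorean identity for the complementary projections $\sP_\Omega$, $\sP_{\Omega^\perp}$, note the constraint touches only the $\Omega$-component, and reduce to the standard Euclidean ball projection inside $\mathrm{range}(\sP_\Omega)$. This is more elementary and self-contained — it avoids the $\tau$ machinery and the duality argument underlying Proposition \ref{prop: projection} entirely, needs no rank or image hypotheses, and remains valid at $\varepsilon = 0$; the one subtlety (that the ambient ball projection agrees with the subspace-constrained one because both $\sP_\Omega(Z)$ and the center $\sP_\Omega(M)$ lie in the subspace, so the projected point stays on a segment inside it) you correctly flag and it is easily justified. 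What the paper's route buys, by contrast, is uniformity: the lemma is exhibited as a literal corollary of the general formula, which is the pattern followed for all the application-specific projections (SPCP, EMD, SMC). Your closing remark essentially reconstructs the paper's route, but with a cleaner choice of $A$ as the selection operator into $\bbR^{|\Omega|}$, giving $AA^\top = \II$ so that the inversion is trivial and the scalar equation solves immediately; that is a small improvement in bookkeeping over the paper's square $A = \sP_\Omega$, and since this $A$ has full row rank it satisfies \ref{c: full-rank} without leaning on $\varepsilon > 0$.
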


With this notation and projection formula established,  the special case of PP for (\ref{eq: problem-smc}) is   Algorithm \ref{alg: pp-mc} below.

\begin{algorithm}[H]
    \caption{{\tt Proximal Projection for Stable Matrix Completion (PP-SMC)}}
    \label{alg: pp-mc}
    \setstretch{1.35}
    \def\tab{\hspace*{15pt}}
    \begin{algorithmic}[1]
        \STATE{
            \texttt{PP-SMC}($\sP_\Omega(M), \ \varepsilon$):
        } 
        \STATE{
            \tab {\bf initialize} parameter $\alpha > 0$,  matrices $Z \leftarrow \sP_\Omega(M)$, $X\leftarrow\sP_\Omega(M)$
            \hfill $\vartriangleleft$ Initialize via observed entries
        }

        \STATE{
            \tab {\bf while} stopping criteria not met
        }   
        \STATE{
            \tab\tab $Z \leftarrow Z + \mbox{svt}(2X - Z,\ \alpha) - X$
            \hfill $\vartriangleleft$ Use (\ref{eq: svt})
        }        
        \STATE{
            \tab\tab {\bf if} \ $\|\sP_\Omega(Z-M)\|_F \leq \varepsilon$
            \hfill $\vartriangleleft$ Use (\ref{eq: proj-mc})
        }  
        \STATE{
            \tab\tab\tab $X \leftarrow  Z$
        }          
        \STATE{
            \tab\tab {\bf else}  
        }  
        \STATE{
            \tab\tab\tab $X \leftarrow  \sP_\Omega\left( \frac{[ \|\sP_\Omega(Z-M)\|_F - \varepsilon]M + \varepsilon Z}{\|\sP_\Omega(Z-M)\|_F}\right) + \sP_{\Omega^\perp}(Z) $
            \hfill $\vartriangleleft$ Use (\ref{eq: proj-mc})
        }

        \STATE{
            \tab {\bf return} $X$
        }
    \end{algorithmic}
    \end{algorithm}  

This numerical example consists of solving (\ref{eq: problem-smc}) for various Gaussian matrices $M$. Letting $n=1,000$, we generated $n\times n$ matrices $M$ of rank $r$ by sampling two $n\times r$ factors $M_L$ and $M_R$ independently, each having i.i.i.d Gaussain entries and then setting $M = M_L M_R^\top$.
The set of observed entries $\Omega$ is uniformly sampled among sets of cardinality $s$. The observations of $M$ are corrupted by noise $N$, which is also Gaussian, and $\varepsilon$ is set to be the Frobenius norm of the noise in the observations. Results are shown using various choices of $s$ and $r$, and we note the degrees of freedom $d_r$ of an $n\times n$ matrix of rank $r$ is $d_r = r\cdot (2n-r)$.

\newpage

\begin{figure}[t]
    \centering
    \subfloat[Violation $\dfrac{\max\{\|\sP_\Omega(X^k - M)\|_F - \varepsilon,0\}}{\varepsilon}$ ]{
        \includegraphics[page=1]{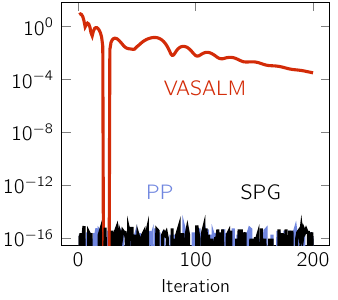}
    } 
    \subfloat[  Objective $\|X^k\|_\star$] {
        \hspace*{5pt}
        \includegraphics[page=2]{images/matrix-completion/smc-plots.pdf}
        \hspace*{5pt}
    }
    \subfloat[Residual $\dfrac{\|X^{k+1} - X^k\|_F}{\|M\|_F}$]{
        \includegraphics[page=3]{images/matrix-completion/smc-plots.pdf}
    }    
    \caption{Comparison of PP, VASALM, and SPG for solving (\ref{eq: problem-smc}) with $\mbox{rank}(M) = r = 10$ and $|\Omega| = s = 5d_r$.  
    Plots show medians from 10 trials with distinct random seeds.
    }
    \label{fig: plots-smc-10}
\end{figure}     
 
\begin{figure}[H]
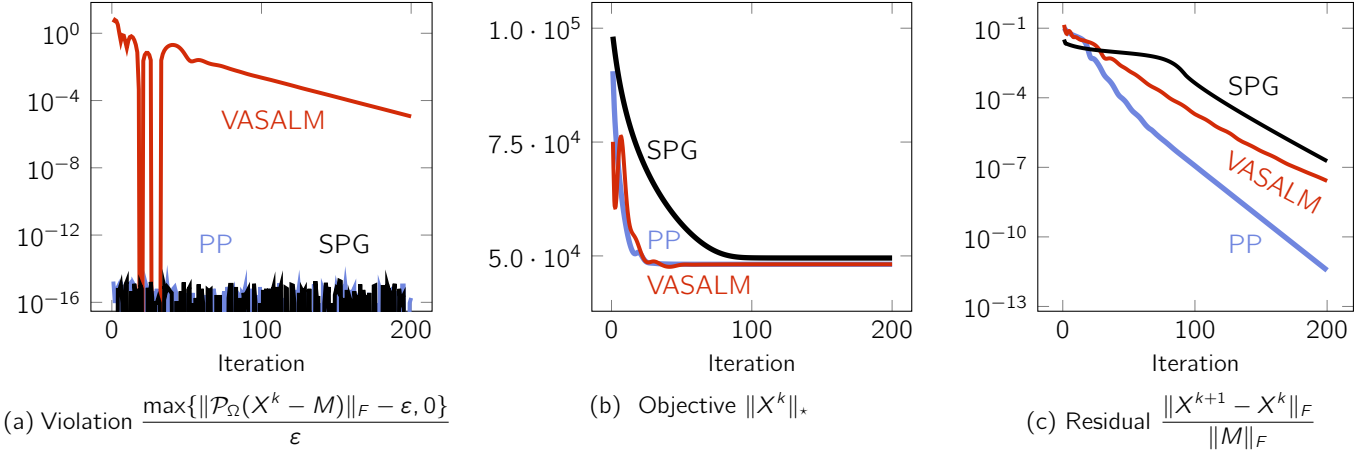
 
    \centering
    \subfloat[Violation $\dfrac{\max\{\|\sP_\Omega(X^k - M)\|_F - \varepsilon,0\}}{\varepsilon}$ ]{
        \includegraphics[page=4]{images/matrix-completion/smc-plots.pdf}
    } 
    \subfloat[  Objective $\|X^k\|_\star$] {
        \hspace*{5pt}
        \includegraphics[page=5]{images/matrix-completion/smc-plots.pdf}
        \hspace*{5pt}
    }
    \subfloat[Residual $\dfrac{\|X^{k+1} - X^k\|_F}{\|M\|_F}$]{
        \includegraphics[page=6]{images/matrix-completion/smc-plots.pdf}
    }    
    \caption{Comparison of PP, VASALM, and SPG for solving (\ref{eq: problem-smc}) with $\mbox{rank}(M) = r = 50$ and $|\Omega|=s = 4d_r$.   
    Plots show medians from 10 trials with distinct random seeds. 
    }
    \label{fig: plots-smc-50}
\end{figure}   

In similar fashion to the example for (\ref{eq: problem-spcp}), here we compare PP to (VASALM) \cite{tao2011recovering} and a ``smoothed'' proximal gradient (SPG) \cite{aybat2014efficient}  on a variation (\ref{eq: problem-smc-smoothed}) of (\ref{eq: problem-smc}) wherein the nuclear norm is smoothed (via a tunable parameter $\mu$). 
Note PP, VASALM, and SPG all have essentially identical per-iteration costs, which are dominated by the evaluation of $\mbox{svt}(\cdot)$.
The parameters for each algorithm were chosen to give good performance for the case where $r=50$ and $s/d_r = 4$. To demonstrate the flexibility of these algorithms, these parameters were held fixed across all trials and values of $r$ and $s/d_r$. \\

Numerical results are plotted in Figures \ref{fig: plots-smc-10} and \ref{fig: plots-smc-50} and also shared in Table \ref{tab: smc}.
As shown in Figures \ref{fig: plots-smc-10}a and \ref{fig: plots-smc-50}a, 
PP and SPG are feasible at each iteration while VASALM iteratively reduces its constraint violation.    
Figures \ref{fig: plots-smc-10}b and \ref{fig: plots-smc-50}b show the objective $\|X^k\|_\star$ of PP and VASALM converge quickly, and that SPG converged to a suboptimal limit in each case. Figures \ref{fig: plots-smc-10}c and \ref{fig: plots-smc-50}c show PP converges faster than VASALM and both of these converge faster than SPG. 
Similar results are reported in Table \ref{tab: smc}.
The following statements hold for all three configurations shown in the table.  PP converges in fewer steps than VASALM and SPG (\textit{i.e.} about 55\% as many steps as VASALM and 40\% as many as SPG). 
The objective values $\|X^k\|_\star$ of PP and VASALM agree to three digits of accuracy.  The output of VASALM has a violation exceeding machine precision. SPG performs worse than PP with respect to every metric. Overall, PP outperforms VASALM and SPG in these examples.


\begin{table}[t] 
    \def\smcItersPPa{105}
\def\smcViolPPa{1.80e-16}
\def\smcObjPPa{9.49e+03}
\def\smcItersPPb{61}
\def\smcViolPPb{3.28e-16}
\def\smcObjPPb{4.82e+04}
\def\smcItersPPc{54}
\def\smcViolPPc{1.16e-15}
\def\smcObjPPc{9.56e+04}
\def\smcItersVASALMa{185}
\def\smcViolVASALMa{5.45e-04}
\def\smcObjVASALMa{9.49e+03}
\def\smcItersVASALMb{110}
\def\smcViolVASALMb{1.35e-03}
\def\smcObjVASALMb{4.82e+04}
\def\smcItersVASALMc{96}
\def\smcViolVASALMc{1.30e-03}
\def\smcObjVASALMc{9.56e+04}
\def\smcItersSPGa{247}
\def\smcViolSPGa{3.24e-16}
\def\smcObjSPGa{1.09e+04}
\def\smcItersSPGb{145}
\def\smcViolSPGb{9.29e-16}
\def\smcObjSPGb{4.96e+04}
\def\smcItersSPGc{133}
\def\smcViolSPGc{6.02e-16}
\def\smcObjSPGc{9.69e+04}

    \small  
    \centering 
    \setstretch{1.3}
    \begin{tabular}{ccc|ccc|ccc|ccc}
        \multicolumn{3}{c|}{Unknown Matrix $M$}
        & \multicolumn{9}{c}{Result} \\\hline  
        \multirow{2}{*}{rank $(r)$}
        & \multirow{2}{*}{$s/d_r$}
        & \multirow{2}{*}{$s/n^2$} 
        &
        \multicolumn{3}{c|}{PP}
        & \multicolumn{3}{c|}{VASALM} 
        & \multicolumn{3}{c}{SPG} \\
         & & & \# & Viol & $\|X^k\|_\star$ & \# & Viol & $\|X^k\|_\star$ & \# & Viol  & $\|X^k\|_\star$  \\\hline
         10  & 5 & 0.0995 & 
         {\bf \smcItersPPa} & \smcViolPPa   & {\smcObjPPa} & 
         \smcItersVASALMa & \smcViolVASALMa   & \smcObjVASALMa  & 
         \smcItersSPGa & \smcViolSPGa & \smcObjSPGa \\
         50  & 4 & 0.3900 & 
         {\bf \smcItersPPb} & \smcViolPPb & { \smcObjPPb} & 
         \smcItersVASALMb&  {\smcViolVASALMb} & {\smcObjVASALMb} &
         \smcItersSPGb & \smcViolSPGb & \smcObjSPGb\\
         100 & 3 & 0.5700 & 
        {\bf \smcItersPPc} & \smcViolPPc & { \smcObjPPc} &  
         \smcItersVASALMc &  {\smcViolVASALMc} &  \smcObjVASALMc &
         \smcItersSPGc & \smcViolSPGc &\smcObjSPGc \\  
    \end{tabular}
    \caption{Results for stable matrix completion example with $M \in \bbR^{n\times n}$ and $n=1,000$. 
    Each algorithm executed until $\|X^{k+1} - X^k \|_F / \|M\|_F \leq 10^{-5}$. The number of iterations $k$ to meet this stopping condition is denoted by \#.  The relative constraint violation $\max\{ \|\sP_\Omega(X^k-M)\|_F - \varepsilon,0\}/\varepsilon$ is denoted by ``Viol.''
    Note an $n \times n$ matrix of rank $r$ depends upon $r(2n - r)$ degrees of freedom $d_r$. Results shown are averages for 10 random seeds.
    }
    \label{tab: smc}
\end{table}

\section{Discussion}  

This work proposes the proximal projection (PP) algorithm for minimizing proximable functions subject to satisfication of linear constraints within a specified tolerance. It has a simple and easy-to-code formulation. The primary novelty herein is showing how to project onto the constraint set. This can be efficiently computed by solving a 1D root-finding problem; in some important instances, explicit formulas are available. 
This projection is interwoven with proximal operations via Douglas-Rachford splitting to obtain PP.
Our formal analysis shows PP converges, with any choice of step size, to an optimal solution. Moreover, unlike many algorithms that only achieve feasibility asymptotically, the output of PP is feasible  for any stopping iteration. \\
 
 Numerically, we show PP performs favorably against alternatives in a few important applications: basis pursuit, stable principal component pursuit, earth mover's distances, and matrix completion. In each   example provided, PP is numerically verified to be feasible and to converge to an optimal solution.
 Moreover, PP performs at least as well as each alternative method in these examples with respect to every metric considered.
 \\
 
The presented projection formula provided can be included in other algorithms (\textit{e.g.} projected gradient when the objective is smooth); we leave investigation of this to future work.  Acceleration techniques \cite{themelis2019supermann,zhang2020globally,park2022exact} can be used on Douglas-Rachford splitting, which therefore apply to PP; future work could examine the performance of these. Lastly, extensions may also involve PP's incorporation as an optimization layer in machine learning applications (\textit{e.g.} the realm of learning-to-optimize and predict-then-optimize \cite{chen2021learning,amos2017optnet,shlezinger2023model,mckenzie2024differentiating,elmachtoub2022smart}).

\section*{Acknowledgements} 
We thank Samy Wu Fung for his helpful discussions and feedback on early drafts of this manuscript.

{\small 
    \bibliographystyle{unsrt} 
    \bibliography{refs} 
}

\newpage 
\appendix 
\section{Proofs of Main Results} \label{app: proofs}

\def\sL{\mathcal{L}} 

Each of our results from Section \ref{sec: results} is proven below. For ease of reference,   results are restated before their proof. We begin with an auxiliary lemma used for the proof of Lemma \ref{prop: projection}. \\ 

\begin{lemma}[Unique Positive $\tau_x$] \label{lemma: tau-exists}
    \label{lemma: tau} If \ref{c: full-rank} and \ref{c: b-image} hold and $x\in\bbR^n$ is such that $\|Ax-b\| > \varepsilon$, then there is   unique $\tau_x \in\bbR$ satisfying 
    \begin{equation}
        0 < \tau_x \leq \frac{\sigma_{max}(A)^2}{\|Ax-b\|-\varepsilon}
        \label{eq: tau-bounds}
    \end{equation}
    for which 
    \begin{equation}
        \tau_x \| (AA^\top + \varepsilon\tau_x)^{-1} Ax\| = 1.
        \label{eq: lemma-tau-x-condition}
    \end{equation}
\end{lemma}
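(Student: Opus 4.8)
The plan is to collapse the matrix equation (\ref{eq: lemma-tau-x-condition}) into a single scalar equation in $\tau$ and then dispatch it with elementary one-variable analysis. Writing $w \triangleq Ax$ and using the eigendecomposition $AA^\top = U\Sigma\Sigma^\top U^\top$ afforded by an SVD $A = U\Sigma V^\top$, with squared singular values $\sigma_1^2 \geq \cdots \geq \sigma_m^2 > 0$ (positivity coming from \ref{c: full-rank}), I would diagonalize the inverse. Setting $c \triangleq U^\top w$ this gives
\begin{equation}
    g(\tau) \triangleq \tau\,\|(AA^\top + \varepsilon\tau\II)^{-1} w\| = \left( \sum_{i=1}^m \frac{\tau^2 c_i^2}{(\sigma_i^2 + \varepsilon\tau)^2}\right)^{1/2},
\end{equation}
so that (\ref{eq: lemma-tau-x-condition}) is precisely $g(\tau) = 1$. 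Because $\|Ax\| > \varepsilon \geq 0$ forces $w \neq 0$, at least one coefficient $c_i$ is nonzero, which I will use for strict monotonicity.

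For existence and uniqueness I would argue $g$ is continuous and strictly increasing on $[0,\infty)$ with $g(0)=0$. Each scalar map $\tau \mapsto \tau/(\sigma_i^2 + \varepsilon\tau)$ is nonnegative, vanishes at $\tau = 0$, and has derivative $\sigma_i^2/(\sigma_i^2 + \varepsilon\tau)^2 > 0$, so $g^2$ is a $c_i^2$-weighted sum of squares of strictly increasing nonnegative functions and is therefore itself strictly increasing (at least one weight being positive). For the behaviour at infinity: if $\varepsilon > 0$ then $g(\tau) \to \|c\|/\varepsilon = \|Ax\|/\varepsilon > 1$, while if $\varepsilon = 0$ then $g(\tau) = \tau\,\|(AA^\top)^{-1}w\|$ grows without bound; in either case $g$ eventually exceeds $1$. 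The intermediate value theorem together with strict monotonicity then yields a unique $\tau_x > 0$ with $g(\tau_x)=1$.

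The only nonroutine step is the a priori bound (\ref{eq: tau-bounds}), and the trick is to avoid solving for $\tau_x$ explicitly. Since the largest eigenvalue of $AA^\top + \varepsilon\tau_x\II$ is $\sigma_{max}(A)^2 + \varepsilon\tau_x$, the smallest eigenvalue of its inverse is $1/(\sigma_{max}(A)^2 + \varepsilon\tau_x)$, so
\begin{equation}
    \|(AA^\top + \varepsilon\tau_x\II)^{-1} Ax\| \geq \frac{\|Ax\|}{\sigma_{max}(A)^2 + \varepsilon\tau_x}.
\end{equation}
Substituting this into $g(\tau_x)=1$ gives $\sigma_{max}(A)^2 + \varepsilon\tau_x \geq \tau_x\,\|Ax\|$, and rearranging with $\|Ax\| - \varepsilon > 0$ produces exactly $\tau_x \leq \sigma_{max}(A)^2/(\|Ax\|-\varepsilon)$.

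I expect no deep obstacle here; the proof is essentially a monotonicity-plus-intermediate-value argument. The two points that require care are, first, the unified handling of the cases $\varepsilon = 0$ and $\varepsilon > 0$ (the former is exactly where \ref{c: full-rank} is genuinely needed, to guarantee invertibility of $AA^\top$), and second, justifying strict monotonicity of $g$ by tracking that $w = Ax \neq 0$ so that some $c_i^2 > 0$.
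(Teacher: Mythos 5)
Your overall strategy is essentially the paper's: diagonalize via the SVD to collapse (\ref{eq: lemma-tau-x-condition}) into a scalar equation $g(\tau)=1$, prove monotonicity termwise, and invoke the intermediate value theorem. Your route to the bound (\ref{eq: tau-bounds}) is in fact cleaner than the paper's --- the paper evaluates $g$ at the explicit endpoint $\sigma_{max}(A)^2/(\|Ax\|-\varepsilon)$ so the IVT runs on a bounded interval, whereas you extract the bound a posteriori from $g(\tau_x)=1$ via the eigenvalue estimate $\|(AA^\top+\varepsilon\tau_x\II)^{-1}Ax\| \geq \|Ax\|/(\sigma_{max}(A)^2+\varepsilon\tau_x)$. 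However, there is a genuine gap at the outset: you assert $\sigma_1^2\geq\cdots\geq\sigma_m^2>0$ with ``positivity coming from \ref{c: full-rank}.'' Condition \ref{c: full-rank} is a disjunction --- \emph{either} $A$ has full row rank \emph{or} $\varepsilon>0$ --- and does not guarantee positive singular values. The rank-deficient case with $\varepsilon>0$ is not a corner case; it is exactly the situation in the paper's earth mover's distance example, where the divergence operator lacks full row rank and only $\varepsilon>0$ makes $AA^\top+\varepsilon\tau\II$ invertible. If some $\sigma_i=0$, your termwise justification breaks: the map $\tau\mapsto \tau/(\sigma_i^2+\varepsilon\tau)\equiv 1/\varepsilon$ is constant, its derivative $\sigma_i^2/(\sigma_i^2+\varepsilon\tau)^2$ is zero, and ``at least one weight being positive'' no longer yields strict monotonicity of $g^2$ --- indeed, if the only nonzero coefficients $c_i$ sat on indices with $\sigma_i=0$, then $g$ would be constant, so $g(\tau)=1$ could have no solution at all, and even your claim $g(0)=0$ (as a limit) would fail, since such a term tends to $c_i^2/\varepsilon^2$ as $\tau\to 0^+$.

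The repair is short but requires an observation you never make: because $w=Ax$ lies in the range of $A$, you have $c = U^\top Ax = \Sigma V^\top x$, i.e.\ $c_i = \sigma_i (V^\top x)_i$, so $c_i=0$ automatically whenever $\sigma_i=0$. Hence all the weight in $g^2$ sits on indices with $\sigma_i>0$, and $\|Ax\|>\varepsilon\geq 0$ guarantees at least one such index has $c_i\neq 0$; with that in hand, your monotonicity argument, the limits $g(\tau)\to\|Ax\|/\varepsilon>1$ (for $\varepsilon>0$) and $g(\tau)\to\infty$ (for $\varepsilon=0$), and the eigenvalue bound all go through verbatim. This is precisely how the paper's proof handles rank deficiency: it notes that if $\sigma_i=0$ then $\sigma_i\tau/(\sigma_i^2+\varepsilon\tau)=0$ for all $\tau>0$, and in the uniqueness step it exhibits an index $\ell$ with $\sigma_\ell>0$ and $(V^\top x)_\ell\neq 0$ to make the derivative strictly positive. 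As written, your proof covers only the full-row-rank branch of \ref{c: full-rank}; add the observation $c_i=\sigma_i(V^\top x)_i$ and it covers both.
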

\begin{proof}   
    We proceed in the following manner. First, we obtain an alternative expression for the left hand side of (\ref{eq: lemma-tau-x-condition}) in terms of the SVD of $A$, which is continuous in $\tau$ (Step 1). For the $\varepsilon = 0$ case, we then establish uniqueness and the bounds for  $\tau_x$   in (\ref{eq: tau-bounds}) (Step 2). For the $\varepsilon > 0$ case, we establish existence of $\tau_x$ in the desired interval by  application of the intermediate value theorem  (Step 3). Lastly, uniqueness of $\tau_x$ is   established for the $\varepsilon > 0$ case (Step 4). For notational compactness, we henceforth set 
    \begin{equation}
        \phi(\tau) \triangleq \|\tau (AA^\top + \varepsilon\tau\mbox{I})^{-1}(Ax-b)\|^2.
    \end{equation} 

    \textbf{Step 1}. Let $U\Sigma V^\top$ be the   singular value decomposition of $A$ with $\Sigma = \mbox{diag}(\sigma_i)$. Also let \\ $\hat{\sigma}  = (\sigma_1,\sigma_2,\ldots,\sigma_r,0,\ldots,0)$ be the vector of $r \leq \min\{m,n\}$ singular values $\sigma_i$ of $A$ in descending order, padded with zeros as needed to yield $\hat{\sigma}\in \bbR^m$.
    Then 
    \begin{equation}
        AA^\top + \varepsilon \tau \II 
        = U\Sigma V^\top V \Sigma^\top U^\top  + \varepsilon\tau\II 
        = U\, \mbox{diag}(\hat{\sigma}_i^2 + \varepsilon\tau) U^\top,
        \quad\mbox{for all}\ \tau > 0,
    \end{equation}
    where the final equality holds since  $U$ and $V$ are orthogonal.
    Whenever $\tau > 0$,   \ref{c: full-rank} ensures $\hat{\sigma}_i^2 + \varepsilon \tau > 0$ for each index $i\in[m]$, and so this matrix is invertible. In particular, using the orthogonality of $U$,
    \begin{equation}
        \tau  (AA^\top + \varepsilon \tau\II)^{-1} 
        = U\, \mbox{diag}\left( \frac{\tau}{\hat{\sigma}_i^2 + \varepsilon\tau}\right)U^\top  
        \quad\mbox{for all}\ \tau > 0,
        \label{eq: proof-tau-inverse-formula}
    \end{equation}
    and so, again using the orthogonality of $U$,
    \begin{equation}
       \phi(\tau)
        = \| \tau (AA^\top + \varepsilon \tau \II)^{-1} (Ax-b)\|^2
        = \left\|   \mbox{diag}\left( \frac{\tau}{\hat{\sigma}_i^2 + \varepsilon\tau}\right)U^\top (Ax-b)\right\|^2,
        \quad\mbox{for all}\ \tau > 0.
    \end{equation}   
    Note each fraction $  \tau / (\hat{\sigma}_i^2 + \varepsilon\tau)$ is continuous on $(0,\infty)$, and the entire expression, being a composition of continuous functions, is also continuous. \\

    \textbf{Step 2}.
    If $\varepsilon = 0$, then      $A$ has full row-rank by \ref{c: full-rank}, which implies $AA^\top$ is invertible. 
    Additionally, this also yields $(AA^\top)^{-1}(Ax-b) \neq 0$ since $(Ax-b)\neq 0$.
    Thus, letting $\sigma_+$ denote the maximum singular value of $A$,
    \begin{equation}
        0 < \tau_x = \frac{1}{\|(AA^\top)^{-1} (Ax-b)\|}
        = \frac{1}{\left\| U \, \mbox{diag}\left( \hat{\sigma}_i^{-2}\right)U^\top (Ax-b) \right\|}
        \leq \frac{1}{  \sigma_+^{-2} \| U U^\top (Ax-b)\|}
        = \frac{\sigma_{+}^2}{\|Ax-b\|}.
    \end{equation}    
    
    \textbf{Step 3}.
    For the remainder of the proof, we assume $\varepsilon > 0$. 
    To apply the intermediate value theorem, we identify $\tau_-$ for which $\phi(\tau_-) < 1$ and $\tau_+$ for which $\phi(\tau_+) \geq 1$. Indeed, set 
    \begin{equation}
        \tau_+
        \triangleq \frac{\sigma_+^2}{\|Ax-b\|-\varepsilon}
    \end{equation}
    so that 
    \begin{equation}
        \frac{\tau_+}{\hat{\sigma_i}^2 + \varepsilon\tau_+} 
        \geq \frac{\tau_+}{\sigma_+^2 + \varepsilon\tau_+} 
        = \frac{\sigma_+^2}{(\|Ax-b\|-\varepsilon) \sigma_+^2 + \varepsilon \sigma_+^2}
        = \frac{1}{\|Ax-b\|},
        \quad\mbox{for all indices $i\in[m]$.}
    \end{equation}
    This implies
    \begin{align}
        \phi(\tau_+)
        & = \left\|  \mbox{diag}\left( \frac{\tau_+}{\hat{\sigma}_i^2 + \varepsilon\tau_+}\right) U^\top (Ax-b)\right\|^2\\
        & \geq   \left\|  \mbox{diag}\left( \frac{1}{\|Ax-b\|}\right) U^\top (Ax-b)\right\|^2\\
        & = \left(\frac{1}{\|Ax-b\|} \cdot \| \mbox{I} U^\top (Ax-b)\|\right)^2 \\
        & = 1,
    \end{align}  
    \textit{i.e.} $\phi(\tau_+) \geq 1$.
    By \ref{c: b-image}, there is $z$ such that $\|Az - b\| < \varepsilon$, which implies   there is $\xi$ with $\|\xi\| < \varepsilon$ such that 
    \begin{equation}
        b = Az + \xi
        \quad\implies\quad
        Ax - b = A(x-z) - \xi.
    \end{equation}    
    Then note 
    \begin{equation}
        \lim_{\tau\rightarrow 0^+}\| \tau (AA^\top + \varepsilon \tau \II)^{-1} A(x-z)\| = \lim_{\tau\rightarrow 0^+} \left\|  \mbox{diag}\left( \frac{\hat{\sigma}_i\tau}{\hat{\sigma}_i^2 + \varepsilon\tau}\right)V^\top (x-z)\right\|
        = 0,
        \label{eq: proof-limit-tau-0}
    \end{equation}
    where the first equality follows from using the SVD of $A$ and orthogonality of $U$.
    The second equality can be deduced by considering two cases for each index $i$.   First, if $\hat{\sigma}_i = 0$ for any index $i$, then $\hat{\sigma}_i \tau / (\hat{\sigma}_i^2 + \varepsilon \tau) = 0$ for all $\tau > 0$. On the other hand, if $\hat{\sigma}_i > 0$, then the numerator of $\hat{\sigma}_i \tau / (\hat{\sigma}_i^2 + \varepsilon \tau)$ converges to zero while the denominator converges to $\hat{\sigma}_i^2 > 0$.
    Consequently, there is $\tau_- > 0$ such that 
    \begin{equation}
        \| \tau_- (AA^\top + \varepsilon \tau_- \II)^{-1} A(x-z)\|
        = \left\|  \mbox{diag}\left( \frac{\hat{\sigma}_i\tau}{\hat{\sigma}_i^2 + \varepsilon\tau}\right)V^\top (x-z)\right\|
        \leq \frac{\varepsilon-\|\xi\|}{2\varepsilon},
    \end{equation} 
    where we recall $\|\xi\| < \varepsilon$.
    Thus,   
    \begin{align}
        \sqrt{\phi(\tau_-)}
        & = \| \tau_- (AA^\top + \varepsilon \tau_- \II)^{-1} (Ax-b)\|\\
        & = \|\tau_- (AA^\top + \varepsilon \tau_- \II)^{-1}  (A(x-z) -\xi) \| \\
        & = \left\| \mbox{diag}\left(\frac{\tau \hat{\sigma}_i}{\hat{\sigma}_i^2 + \varepsilon\tau}\right) V^\top (x-z) - \mbox{diag}\left(\frac{\tau}{\hat{\sigma}_i^2 + \varepsilon\tau} \right)U^\top \xi \right\| \\
        & \leq \left\| \mbox{diag}\left(\frac{\tau \hat{\sigma}_i}{\hat{\sigma}_i^2 + \varepsilon\tau}\right) V^\top (x-z) \right\| +\left\| \mbox{diag}\left(\frac{\tau}{\hat{\sigma}_i^2 + \varepsilon\tau} \right)U^\top \xi \right\| \\
        & \leq \frac{\varepsilon-\|\xi\|}{2\varepsilon} + \frac{\tau}{0^2 + \varepsilon\tau} \|U^\top \xi\|\\
        & = \frac{\varepsilon-\|\xi\|}{2\varepsilon} + \frac{\|\xi\|}{\varepsilon}\\
        & = \frac{\varepsilon+\|\xi\|}{2\varepsilon} \\
        & < 1. 
    \end{align}
    Hence  $\phi(\tau_-) < 1$.
    Therefore, by the intermediate value theorem, there is $\tau_x \in [\tau_-, \tau_+] \subset (0, \tau_+]$ such that (\ref{eq: lemma-tau-x-condition}) holds, from which (\ref{eq: tau-bounds}) also follows.  \\

    \textbf{Step 4}. All that remains is to verify $\tau_x$ is unique. To show this, it suffices to verify $\phi$ is strictly increasing. 
    Note 
    \begin{equation}
        \frac{\mbox{d}}{\mbox{d}\tau} \left( \frac{  \tau}{\hat{\sigma}_i^2 + \varepsilon \tau }\right)^2
        = \frac{2\hat{\sigma}_i^2 \tau}{(\hat{\sigma}_i^2 + \varepsilon\tau)^3} \geq 0,
        \quad \mbox{for all $i$ and} \ \tau > 0.
    \end{equation}
    Using this fact, differentiating reveals 
    \begin{subequations}
        \begin{align}
            \phi'(\tau)
            & = \frac{\mbox{d}}{\mbox{d}\tau} \left\| \mbox{diag}\left(\frac{\tau}{\hat{\sigma}_i^2 + \varepsilon\tau}\right) U^\top(  Ax-b) \right\|^2\\ 
            & = \frac{\mbox{d}}{\mbox{d}\tau} \sum_{i=1}^m  \left(\frac{\tau}{\hat{\sigma}_i^2 + \varepsilon\tau} \cdot (U^\top (Ax-b))_i\right)^2 \\
            & = \sum_{i=1}^m  \frac{2\hat{\sigma}_i^2\tau}{(\hat{\sigma}_i^2 + \varepsilon\tau)^3}  \cdot (U^\top(Ax-b))_i^2\\
            & \geq  \frac{2}{\varepsilon^3\tau^2}\cdot\sum_{i=1}^m \hat{\sigma}_i^2 (  U^\top(Ax-b))_i^2\\
            & =  \frac{2}{\varepsilon^3\tau^2}\cdot \|   \hat{\Sigma} U^\top (Ax-b)\|^2\\
            & =  \frac{2}{\varepsilon^3\tau^2}\cdot \|   {\Sigma}^\top U^\top (Ax-b)\|^2\\
            & =  \frac{2}{\varepsilon^3\tau^2}\cdot \| V \Sigma^\top U^\top (Ax-b)\|^2\\
            & =  \frac{2}{\varepsilon^3\tau^2}\cdot \| A^\top (Ax-b)\|^2,
            \quad\mbox{for all}\ \tau > 0.
            \label{eq: proof-tau-derivative-norm-end}
        \end{align}\label{eq: proof-tau-derivative-norm}\end{subequations}
        Due to convexity of the least squares function $\|Au-b\|^2$, any point $p$ satisfying the first-order optimality condition 
        \begin{equation}
            0 = [\nabla \|Au-b\|^2]_{u=p} = A^\top (Ap-b)
        \end{equation}
        is a minimizer of $\|Au-b\|^2$, \textit{i.e.}
        \begin{equation}
            A^\top (Ap-b) = 0 
            \quad\implies\quad
            p \in \argmin_{u} \|Au-b\|^2
            \quad\implies\quad
            \|Ap - b\|^2 \leq \|Az-b\|^2 < \varepsilon.
        \end{equation} 
        Since  $\|Ax-b\|^2 > \varepsilon$, it follows that $A^\top (Ax-b) \neq 0$. Hence 
        \begin{equation}
            \phi'(\tau)
            \geq \frac{2}{\varepsilon^3\tau^2}\cdot \| A^\top (Ax-b)\|^2
            > 0,\quad\mbox{for all}\ \tau > 0,
        \end{equation}
        \textit{i.e.} $\phi$ is strictly increasing.  
\end{proof}

\vspace*{0.5in}

The next lemma's proof draws heavily from \cite[Lemma 6.68]{beck2017first}.\\

{\bf Proposition \ref{prop: projection}} (Projection Formula).
\textit{\projectionProposition}
 
\begin{proof}
    The set $\sC$ is nonempty by \ref{c: b-image}. As $\sC$ is also closed and convex, projections onto $\sC$ exist and are unqiue \cite[Theorem 1.2.3]{cegielski2012iterative}.
    Let $u^\star$ be the projection of $x$ onto $\sC$ so that $u^\star$ is the unique solution to 
    \begin{equation}
        \min_{u} \left\lbrace \frac{1}{2}\|u-x\|^2 \ : \ \|Au -b \| \leq \varepsilon\right\rbrace.
        \label{eq: proof-projection-problem}
    \end{equation}
    If $\|Ax-b\|\leq\varepsilon$, then the solution to (\ref{eq: proof-projection-problem}) is $u^\star = x$ since, in that case, the objective value is zero and norms are nonnegative.  Henceforth, we assume $\|Ax-b\| > \varepsilon$.
    Introducing $z \in \bbR^m$,  (\ref{eq: proof-projection-problem}) may be rewritten as 
    \begin{equation}
        \min_{u,z} \left\lbrace \frac{1}{2} \|u-x\|^2 \ : \ z = Au-b, \ \|z\|\leq\varepsilon\right\rbrace .
    \end{equation}
    The Lagrangian $\sL\colon \bbR^n \times \bbR^m \times \bbR_{\geq 0} \times \bbR^m\rightarrow \bbR$ for this problem is 
    \begin{subequations}
    \begin{align}
        \sL(u,z ; \alpha, y) 
        & = \frac{1}{2}\|u-x\|^2 + y^\top (z - [Au- b]) + \alpha ( \|z\| - \varepsilon) \\
        & = \left[ \frac{1}{2}\|u-x\|^2 - (A^\top y)^\top u\right] + \left[ \alpha (\|z\|-\varepsilon) + y^\top (z+b)\right].
    \end{align}
    \end{subequations} 
    Since the Lagrangian $\sL$ is separable with respect to $u$ and $z$, the dual objective can be written as 
    \begin{equation}
        \min_{u,z} \sL(u,z; \alpha,y)
        = \min_u \left[  \frac{1}{2}\|u-x\|^2 - (A^\top y)^\top u\right] 
        + \min_z \left[  \alpha  (\|z\|-\varepsilon) + y^\top (z+b)\right].
        \label{eq: proof-proj-dual-obj-lagrangian}    
    \end{equation}
    The minimizer of the minimization problem for $u$ is $u^\star = x + A^\top y$, which yields 
    \begin{equation}
        \min_u \left[  \frac{1}{2}\|u-x\|^2 - (A^\top y)^\top u\right] 
        = \frac{1}{2}\|u^\star - x\|^2 - (A^\top y)^\top u^\star
        = - \frac{1}{2} \|Ay\|^2 - (Ax)^\top y.
        \label{eq: proof-proj-dual-obj-u}
    \end{equation}
    Due to the linear term $y^\top z$, the minimization of $\sL$ with respect to $z$ is obtained when $z$ is anti-parallel to $y$, for which the expression in the $z$ minimization of (\ref{eq: proof-proj-dual-obj-lagrangian}) becomes 
    \begin{equation}
        \left( \alpha - \|y\| \right) \|z\| - \alpha \varepsilon + y^\top b.
        \label{eq: proof-proj-dual-obj-z-expr}
    \end{equation}
    Consequently, if $\|y\| > \alpha$, the expression in (\ref{eq: proof-proj-dual-obj-z-expr}) goes to $-\infty$ as $\|z\|\rightarrow \infty$. Otherwise, it is minimized by picking $z = 0$. In summary,
    \begin{equation}
        \min_z \left[  \alpha  (\|z\|-\varepsilon) + y^\top (z + b)\right]
        = \begin{cases}
        \begin{array}{cl}
            - \alpha \varepsilon  + y^\top b & \mbox{if}\ \|y\|\leq \alpha \\ 
            - \infty & \mbox{otherwise}.
        \end{array}
        \end{cases}
        \label{eq: proof-proj-dual-obj-z}
    \end{equation}
    Combining (\ref{eq: proof-proj-dual-obj-lagrangian}), (\ref{eq: proof-proj-dual-obj-u}), and (\ref{eq: proof-proj-dual-obj-z}), we obtain the dual problem 
    \begin{equation}
        \max_{\alpha\geq 0,\ y \in\bbR^m} \left\lbrace -\frac{1}{2}\|Ay\|^2 - (Ax)^\top y - \alpha \varepsilon + y^\top b \  : \ \|y\|\leq \alpha \right\rbrace. 
        \label{eq: proof-proj-dual-problem}
    \end{equation}
    Importantly, strong duality holds for the primal-dual pair of problems (\ref{eq: proof-projection-problem}) and (\ref{eq: proof-proj-dual-problem}) \cite[Proposition 6.4.4]{bertsekas2003convex}. Thus, upon finding an optimal dual solution $y^\star$, the primal solution is $u^\star = x + A^\top y^\star$. \\

    The dual objective is strictly decreasing as $\alpha$ increases beyond $\|y\|$, and so the optimal choice for $\alpha$ is always $\alpha = \|y\|$. Thus, the dual problem (\ref{eq: proof-proj-dual-problem}) may be simplified to 
    \begin{equation}
        \min_{y \in\bbR^m} \left\lbrace \frac{1}{2}\|A^\top y\|^2 + (Ax-b)^\top y +\varepsilon  \|y\|   \right\rbrace. 
    \end{equation}    
    If the optimal dual variable $y^\star$ were zero, it would follow that $\|Au^\star -b\| = \|A(x + A^\top 0) -b\| = \|Ax-b\|>\varepsilon$, a contradiction to the constraint in (\ref{eq: proof-projection-problem}). Thus, $y^\star$ is nonzero and satisfies the first-order optimality condition  
    \begin{equation}
        0 
        = \frac{\mbox{d}}{\mbox{d}y} \left[ \frac{1}{2}\|A^\top y\|^2 + (Ax)^\top y +\varepsilon  \|y\| - y^\top b\right]_{y=y^\star}
        \hspace*{-12pt}
        = A A^\top y^\star + Ax+\frac{\varepsilon y^\star}{\|y^\star\|} -b
        = \left( A  A^\top + \frac{\varepsilon \II }{\|y^\star\|}\right) y^\star + Ax - b.
    \end{equation}
    By \ref{c: full-rank}, either $A$ has full row rank or $\varepsilon > 0$, and so the matrix $AA^\top + \varepsilon I /\|y^\star\|$ is invertible. Thus, 
    \begin{equation}
        y^\star = - \left(AA^\top + \frac{\varepsilon\II}{\|y^\star\|}\right)^{-1} (Ax-b).
        \label{eq: proof-ystar-recursive}
    \end{equation}
    Taking norms of both sides reveals 
    \begin{equation}
        \|y^\star\| =  \left\| \left(AA^\top + \frac{\varepsilon \II}{\|y^\star\|}\right)^{-1}(Ax-b)\right\|.
    \end{equation} 
    By Lemma \ref{lemma: tau-exists},
    there is a unique scalar $\tau_x > 0$ for which 
    \begin{equation}
        \frac{1}{\tau_x} = \left\| (AA^\top + \varepsilon \tau_x \II)^{-1} (Ax-b)\right\|.
    \end{equation}
    Hence $\|y^\star\| = 1/\tau_x$ and 
    (\ref{eq: proof-ystar-recursive}) becomes 
    \begin{equation}
        y^\star = - \left(AA^\top + \varepsilon\tau_x \II\right)^{-1} (Ax-b),
    \end{equation}
    from which we conclude
    \begin{equation}
        u^\star = x + A^\top y^\star
        = x - A^\top (AA^\top + \varepsilon \tau_x \II)^{-1} Ax,
    \end{equation}
    as desired.
    \end{proof}

\vspace*{0.5in}

We now verify our convergence result, which is a special case of existing results for Douglas Rachford splitting.  \\

{\bf Theorem \ref{thm: main-theorem}} (Convergence of PP). 
\textit{\mainTheorem}

\begin{proof}

    By \ref{c: proper-feasible},  
    $\mbox{ri}(\mbox{dom}(f)) \cap \mbox{ri}(\sC) \neq \varnothing$ and, in particular, $\mbox{dom}(f)\cap\sC \neq \varnothing$. This fact, together with \ref{c: coercive-bounded} and $\sC$ being closed and convex, enable
    \cite[Proposition 11.15]{bauschke2017convex} to be applied to deduce the existence of a solution to (\ref{eq: problem}). 
    Futhermore, by \cite[Proposition 6.19]{bauschke2017convex}, $0 \in \mbox{sri}(\sC - \mbox{dom}(f)),$\footnote{Here $\mbox{sri}(\cdot)$ denotes the strong relative interior (see \cite[Definition 6.9]{bauschke2017convex}).}  
    and so \cite[Proposition 27.8]{bauschke2017convex} may be applied to deduce 
    \begin{equation} 
        \{ x : 
        0 \in \partial f(x) + \partial N_{\sC}(x)\}
        =
        \argmin_{x\in\sC}  f(x)  
        = \argmin_{x\in\bbR^n} \Big\lbrace f(x) \, : \, \|Ax-b\|\leq\varepsilon\Big\rbrace,
        \label{eq: proof-thm-fonc}
    \end{equation}
    where $N_\sC$ is the normal cone operator for $\sC$. 
    By \ref{c: full-rank} and \ref{c: b-image} and Proposition \ref{prop: projection}, the update for each $x^k$ is precisely the projection of $z^k$ onto $\sC$.
    Thus, the iteration (\ref{eq: pp-iteration}) is an instance of Douglas-Rachford splitting \cite{bauschke2017convex,lions1979splitting}.  
    By \cite[Theorem 26.11]{bauschke2017convex}, $\{z^k\}$ and $\{x^k\}$ converge, and the limit $x^\star$ of $\{x^k\}$ satisfies 
    \begin{equation}
        0 \in \partial f(x^\star) + N_{\sC}(x^\star). 
        \label{eq: proof-thm-0-derivative}
    \end{equation}
    By (\ref{eq: proof-thm-fonc}) and (\ref{eq: proof-thm-0-derivative}), we conclude $x^\star$ is a solution to (\ref{eq: problem}), as desired.
    Lastly, note $\|Ax^k-b\|\leq\varepsilon$ since, as noted above, $x^k = \sP_\sC(z^k) \in \sC$ for all $k\in\bbN$.
\end{proof}

\newpage
\section{Numerical Examples Supplement} \label{app: experiments}

A subsection is dedicated herein to providing further details for each of the numerical examples, particularly formulations of the algorithms to which PP is compared and proofs for the special cases of the projection formula in Proposition \ref{prop: projection} to the various settings.

\subsection{Basis Pursuit} \label{app: bp}

We initialize iterates to the zero vector (\textit{e.g.} $z^1 = 0$ for PP). Entries of $A$ are drawn from $A_{ij}\sim \mathcal{N}(0,1/m)$.
In each case, we attempted to pick parameters that yield best performance while respecting conditions needed to ensure convergence guarantees. Here 10 trials were used, with the mean time reported in the main text and medians used for the plots in Figure \ref{fig: plots-bp}. \\

{\bf Proximal Projection (PP).} We applied Algorithm \ref{alg: pp-eps=0} with $\alpha = 0.1$ and the shrink operator.\\

{\bf Linearized Bregman (LB).}
Rather than directly minimize $\ell_1$, linearized Bregman solves
\begin{equation}
    \min_{x}  \mu \|x\|_1 + \frac{1}{2\alpha} \|u\|^2
    \quad\mbox{s.t.}\quad 
    Ax = b,
\end{equation}
which yields the same result as (\ref{eq: problem-bp}) when $\alpha$ is sufficiently large. 
Following \cite{cai2009linearized}, we use the iteration  
\begin{subequations}
    \begin{align}
        v^{k+1} & = v^k - A^\top (Ax^k - b) , \\ 
        x^{k+1} & =   \mbox{shrink}\left( \alpha v^{k+1},\ \alpha   \mu\right) .
    \end{align}
\end{subequations}
By \cite[Theorem 2.4]{cai2009linearized},  this iteration converges for $\alpha \in (0, 2/\|AA^\top\|)$; we used $\mu = 2 \|AA^\top\|$ and $\alpha = 2 / \|A A^\top \|$. \\

{\bf Linearized Method of Multipliers (LMM).} 
For step sizes $\alpha,\lambda > 0$, the linearized method of multipliers solves (\ref{eq: problem-bp}) using iterates of the form 
\begin{subequations}
    \begin{align}
        x^{k+1}
        & = \mbox{shrink}\left(x^k - \alpha A^\top [v^k + \lambda(Ax^k - b)],\   \alpha \right) , \\
        v^{k+1} & = v^k + \lambda (Ax^{k+1} - b).
    \end{align}
\end{subequations}
This converges for $\alpha \lambda \|A^\top A\| < 1$ (\textit{e.g.} see \cite[Section 3.5]{ryu2022large}). We used $\lambda = 100\|A^\top A\|$ and $\alpha = 1 / (\lambda \|A^\top A\|)$. \\ 

{\bf Primal Dual Hybrid Gradient (PDHG).}
For step sizes $\alpha,\lambda > 0$, the primal dual hybrid gradient (PDHG) iteration solves (\ref{eq: problem-bp}) using the iteration
\begin{subequations}
    \begin{align}
        x^{k+1} & = \mbox{shrink}(x^k - \alpha A^\top v^k,\ \alpha),\\ 
        v^{k+1} & =   v^k + \lambda  [ A(2x^{k+1} - x^k) - b] .
    \end{align}
    \label{eq: app-pdhg}\end{subequations}Here $\alpha \lambda \|A^\top A\| < 1$ ensures convergence (see \cite[Section 3.2]{ryu2022large}). We used $\lambda = 100 \|A^\top A\|$ and $\alpha = 1/(\lambda \|A^\top A\|)$.

\newpage
\subsection{Stable Principal Component Pursuit} \label{app: spcp}

For each method, we initialize the low rank term to $M$ and the sparse term to the zero matrix (\textit{e.g.} for proximal projection $X^1 = (X_L^1, X_S^1) = (M,0)$).

\paragraph*{Proximal Projection (PP).}
Letting $X = (X_L, X_S)$, the problem (\ref{eq: problem-spcp}) may be rewritten as 
\begin{equation}
    \min_{X} \|X_L\|_\star + \lambda \|X_S\|_1 
    \quad \mbox{s.t.}\quad 
    \|A X - M \|_F \leq \varepsilon,
\end{equation}
where $A = [\II \ \II]$. The proximal for (\ref{eq: problem-spcp}) can be written as
\begin{equation}
    \prox_{\alpha f}(X)
    = \left[\begin{array}{c} \prox_{\alpha \|\cdot\|_\star} (X_L) \\ \prox_{\alpha \lambda \|\cdot\|_1}(X_S)\end{array}\right]
    = \left[\begin{array}{c} \mbox{svt}(X_L, \ \alpha) \\ \mbox{shrink}_1(X_S,\ \alpha\lambda)\end{array}\right].
    \label{eq: spcp-prox}
\end{equation}

We next verify the projection formula used by PP for (\ref{eq: problem-spcp}). For ease of reference, the result is restated before its proof. \\

{\bf Lemma \ref{lemma: projection-spcp}} (SPCP Projection).
\textit{\lemmaProjectionSPCP} 
\begin{proof}
    When $Z$ is feasible, $\sP_\sC(Z) = Z$.
    In this case,  $\mu_Z = 0$ and so the result holds. 
    For the remainder of the proof, we assume $Z$ is not feasible.
    By Proposition \ref{prop: projection}, when $Z$ is not feasible, $\tau_Z$ is the positive solution to
    \begin{equation}
        \tau \left\| (AA^\top +   \varepsilon\tau \II)^{-1} [AZ-M] \right\|_F=1 
        \label{eq: app-spcp-tau-condition}.
    \end{equation}
    For each $\tau \geq 0$, note 
    \begin{equation}
        \left(AA^\top +  \tau \II \right)^{-1}
        = \left((2 +  \varepsilon\tau) \II \right)^{-1}
        = \frac{\II}{2+  \varepsilon\tau}.
    \end{equation}
    Thus, (\ref{eq: app-spcp-tau-condition}) may be rewritten as 
    \begin{equation}
    \frac{\tau}{2+ \varepsilon\tau} \|AZ - M\|_F = 1.
    \end{equation}
    Rearranging to isolate $\tau$ yields 
    \begin{equation}
        \tau_Z =  \frac{2}{\|AZ-M\|_F-\varepsilon}.
    \end{equation}
    Thus,  if $\|AZ-M \|_F > \varepsilon$, then 
    \begin{subequations}
    \begin{align}
        A^\top  (AA^\top + \varepsilon\tau_X \II)^{-1} (A Z -M)
        &= \frac{A^\top (AZ-M)}{2+\varepsilon \tau_Z} \\
        &= \frac{\|AZ-M\|_F - \varepsilon}{2\|AZ-M\|_F} A^\top (A Z -M)\\
        &= \frac{\|AZ-M\|_F - \varepsilon}{2\|AZ-M\|_F} \left[\begin{array}{c} Z_L + Z_S - M \\ Z_L + Z_S - M \end{array}\right]\\    
        &= \frac{\|Z_L+Z_S-M\|_F - \varepsilon}{2\|Z_L+Z_S-M\|_F} \left[\begin{array}{c} Z_L + Z_S - M \\ Z_L + Z_S - M\end{array}\right].
    \end{align}
    \end{subequations}
    Hence
    \begin{subequations}
    \begin{align}
        \sP_\sC(X) 
        & = 
        \begin{cases}
        \begin{array}{cl}
            X - \frac{\|X_L+X_S-M\|_F - \varepsilon}{2\|X_L+X_S-M\|_F} \left[\begin{array}{c} X_L + X_S - M \\ X_L + X_S - M\end{array}\right]
            & \mbox{if}\ \|X_L + X_S -M\|_F > \varepsilon \\ 
            X & \mbox{otherwise}
        \end{array}
        \end{cases} \\
        & = X - \max\left\lbrace \frac{\|X_L+X_S-M\|_F - \varepsilon}{2\|X_L+X_S-M\|_F} , 0 \right\rbrace  \left[\begin{array}{c} X_L + X_S - M \\ X_L + X_S - M\end{array}\right],
    \end{align}\label{eq: spcp-proj}\end{subequations}and
    the proof is complete.
\end{proof}

Substituting  the proximal formula (\ref{eq: spcp-prox}) and  projection formula from Lemma \ref{lemma: projection-spcp} into (\ref{eq: pp-iteration}) yields the updates 
\begin{subequations}
    \begin{align} 
        Z_L^{k+1} & =  \mbox{svt}(Z_L^k - 2\mu_k(Z_L^k+Z_S^k-M),\ \alpha)\\
        Z_S^{k+1} & =  \mbox{shrink}_1(Z_S^k - 2\mu_k(Z_L^k+Z_S^k-M),\ \lambda\alpha)
    \end{align}
\end{subequations}
Thus, Algorithm \ref{alg: pp} for (\ref{eq: problem-spcp}) simplifes to Algorithm \ref{alg: pp-spcp}. Furthermore, in the case where $\varepsilon = 0$, the sequence $\{Z^k\}$ is  the same as the sequence $\{X^k\}$ in (\ref{eq: spcp-pg}) below. \\

\paragraph*{Variant of Alternating Splitting Augmented Lagrangian Method (VASALM).}
For a parameter $\alpha > 0$, the augmented Lagrangian $\sL$ for (\ref{eq: problem-spcp}) is 
\begin{equation}
    \sL_\alpha (L, S, N; \Lambda)
    = \|L\|_\star + \lambda \|S\|_1 + \delta_{B(0,\varepsilon)}(N) + \left<\Lambda, L + S + N - M\right> + \frac{\alpha}{2}\|L + S + N - M\|_F^2.
\end{equation}
Note 
\begin{align}
    \frac{\alpha}{2} \left\| L + S + N - M +  \frac{\Lambda}{\alpha} \right\|_F^2
    &
    = \frac{1}{2}\left\|\sqrt{\alpha} (L + S + N - M ) + \frac{\Lambda}{\sqrt{\alpha}} \right\|_F^2\\
    & = \frac{\alpha}{2}\|L + S + N- M \|_F^2  + \left< \frac{\Lambda}{\sqrt{\alpha}},  \sqrt{\alpha}(L + S + N - M)\right> + \frac{1}{2\alpha}\|\Lambda\|_F^2 \\
    & = \frac{\alpha}{2}\|L + S + N - M \|_F^2  + \left<\Lambda, L + S + N - M\right> + \frac{1}{2\alpha}\|\Lambda\|_F^2,
\end{align} 
and so
\begin{equation*} 
    \sL_\alpha (L, S, N; \Lambda)
    = \|L\|_\star + \lambda \|S\|_1 + \delta_{B(0,\varepsilon)}(N) + \frac{\alpha}{2} \|L + S + N - M + \alpha \Lambda\|_F^2 - \frac{1}{2\alpha}\|\Lambda\|_F^2.
\end{equation*}
VASALM does proximal steps for $L$, $S$, and $N$ separately, with two dual variable $\Lambda$ updates. Specifically, for $\alpha >0$ and $\eta > 2$ it generates a sequence of updates via  
\begin{subequations}
    \begin{align}
        N^{k+1}
        & = \sP_{B(0,\varepsilon)}\left(    \frac{\Lambda^k}{\alpha} + M - X_L^k - X_S^k \right) \\ 
        \hat{\Lambda}^{k}
        & = \Lambda^k - \alpha(X_L^k + X_S^k + N^{k+1} - M) \\ 
        X_S^{k+1}
        & =  \mbox{shrink}\left( X_S^k + \frac{\hat{\Lambda}^k}{\alpha\eta}, \ \frac{\lambda}{\alpha\eta}\right)\\         
        X_L^{k+1}
        & = \mbox{svt}\left( X_L^k + \frac{\hat{\Lambda}^k}{\alpha\eta},\ \frac{1}{\alpha\eta}\right) \\
        \Lambda^{k+1}
        & = \hat{\Lambda}^k  + \alpha (X_L^{k} - X_L^{k+1}) + \alpha(X_S^{k} - X_S^{k+1}).
    \end{align}
\end{subequations}

We used $\eta = 3$ and $\alpha = 10^{-5}$.

\paragraph*{Partially Smoothed Proximal Gradient.}
Following the Nesterov smoothing technique \cite{nesterov2005smooth},
for a parameter $\mu > 0$, the work \cite{aybat2014efficient} considers the smoothed the version of (\ref{eq: problem-spcp}) given by 
\begin{equation}
    \min_{L,S} \lambda \|S\|_1  + \left( \max_{\|W\|\leq 1} \left<L,W\right> -\frac{\mu}{2}\|W\|_F^2 \right)  
    \quad \mbox{s.t.}\quad 
    \|L + S - M \|_F \leq \varepsilon,
    \label{eq: problem-spcp-smoothed}
    \tag{SPCP$_\mu$}
\end{equation}
which approaches (\ref{eq: problem-spcp}) as $\mu \rightarrow 0^+$.
The nuclear norm approximation $1/\mu$-smooth and has gradient given by 
\begin{equation}
    W_\mu(L) = U\, \mbox{diag}\left( \min\left\lbrace \frac{\sigma_i}{\mu},\ 1 \right\rbrace\right) V^\top,
\end{equation}
where $U\Sigma V^\top$ is here the SVD of $L$. 
Proximal gradient updates take the form 
\begin{equation}
    X^{k+1}
    = \argmin_{X} 
    \lambda \|S\|_1  + \left< W_\mu(X_L^k), \, X_L - X_L^k\right> + \frac{1}{2\mu} \| X_L - X_L^k\|_F^2
    \quad \mbox{s.t.}\quad 
    \|X_L + X_S - M\|_F \leq \varepsilon.
\end{equation}
Following  \cite[Lemma 6.1]{aybat2014efficient}, here proximal gradient update steps are explicitly given by 
\begin{subequations}
    \begin{align}
        X_S^{k+1} & = \mbox{shrink}\left( M - \mu (X_L^k - W_\mu(X_L^k)),  \frac{\lambda [1+\mu \theta^\star]}{\theta^\star} \right)\\
        X_L^{k+1} & = \frac{1}{1 + \mu\theta^\star} \Big[  \mu \theta^\star \left( M - X_S^{k+1}\right) + \big(X_L^k - \mu W_\mu(X_L^k) \big)\Big],
    \end{align} 
\end{subequations}
where $\theta^\star$ is the unique positive solution to\footnote{Here we assume $\varepsilon > 0$.}
\begin{equation}
    \varepsilon 
    = \left\| \min \left\lbrace \frac{\lambda}{\theta},\ \frac{|M-X_L^k + \mu W_\mu(X_L^k)|}{1+\mu \theta}\right\rbrace \right\|_F.
\end{equation}
Since $L - \mu W_\mu(L) 
= \mbox{svt}(L)$, the proximal gradient updates can be rewritten as 
\begin{subequations}
    \begin{align}
        X_S^{k+1} & = \mbox{shrink}\left( M - \mbox{svt}(X_L^k,\ \mu),  \frac{\lambda [1+\mu \theta^\star]}{\theta^\star} \right)\\
        X_L^{k+1} & = \frac{1}{1 + \mu\theta^\star} \Big[  \mu \theta^\star \left( M - X_S^{k+1}\right) +  \mbox{svt}(X_L^k, \ \mu)\Big]
    \end{align}
\end{subequations}
with $\theta^\star$ the solution to 
\begin{equation}
    \varepsilon 
    = \left\| \min \left\lbrace \frac{\lambda}{\theta},\ \frac{|M-\mbox{svt}(X_L^k,\ \mu)|}{1+\mu \theta}\right\rbrace \right\|_F.
\end{equation}
Since $M \in \bbR^{n_1\times n_2}$, we have the bound 
\begin{equation}
    \theta^\star \leq  \min\left\lbrace n_1 n_2  \lambda \varepsilon,\ \left|\frac{\|M-\mbox{svt}(X_L^k,\mu)\|_F - \varepsilon}{\mu\varepsilon}\right|\right\rbrace.
\end{equation}

By the comment following \cite[Theorem 2.1]{aybat2014efficient}, setting $\mu = \delta / \min\{n_1,n_2\}$ ensures a $\delta/2$-optimal solution to (\ref{eq: problem-spcp-smoothed}) is $\delta$-optimal solution to (\ref{eq: problem-spcp}). We set $\delta$ to be about $0.1$ times the optimal value for (\ref{eq: problem-spcp}) to ensure the objective of the limit is within $\sim$10\% of optimal. In Figure \ref{fig: plots-spcp}b, it appears that this choice of $\mu$ leaves a visible gap between the limit of PSPG and the optimal value. Reducing $\mu$ reduces the size of this gap, but also hinders the convergence rate of PSPG.

\paragraph*{Proximal Gradient (PG).} In the original work \cite{zhou2010stable} on SPCP, the authors follow the example of \cite{lin2009fast} to approximate (\ref{eq: problem-spcp}) in their numerical experiments by a soft-penalty variation  
\begin{equation}
    \min_{X} \|X_L\|_\star + \lambda \|X_S\|_1 + \frac{1}{2\mu}\|X_L + X_S - M\|_F^2.
    \label{eq: problem-spcp-soft}
\end{equation}
To apply proximal gradient, here we do the same. The update is 
\begin{equation}
    X^{k+1}
    = \prox_{\alpha f}\left( X^k - \frac{\alpha}{\mu} \left[\begin{array}{c} X_L^k + X_S^k - M \\ X_L^k + X_S^k - M\end{array}\right] \right), 
    \label{eq: spcp-pg}
\end{equation}
where the proximal is given in (\ref{eq: spcp-prox}). Here the quadratic term is $1/\mu$-smooth, and so a stepsize of $\alpha = \mu$ can be used to ensure (\ref{eq: spcp-pg}) converges to a solution to (\ref{eq: problem-spcp-soft}).
In this case, the iteration simplifies to 
\begin{subequations}
    \begin{align}
        X_{L}^{k+1}
        & = \mbox{svt}\left(M - X_S^k,\alpha\right) \\
        X_S^{k+1} 
        & = \mbox{shrink}\left(M - X_L^k,\alpha\lambda\right).
    \end{align}
\end{subequations}

\subsection{Earth Mover's Distance}\label{app: emd}

For simplicity, in the EMD example, we use $h=1.0$. The proof for the EMD projection is given below.
We emphasize, in this subsection, lowercase variables are represented in matrix form. Moreover, although  the divergence operator (denoted by $A$) is linear, its application in this form involves application of left and right matrix multiplications. To keep notation concise, we will write $A$ on the leftmost side with its application understood to be as described in the main text.\\

{\bf Lemma \ref{lemma: projection-emd}} (EMD Projection).
\textit{\lemmaProjectionEMD}\\

\begin{proof}
    This proof is a corollary of Proposition \ref{prop: projection}.
    For $z\in\sC$, the result directly follows from the proposition.
    In the remainder of the proof, we assume $z\notin \sC$. 
    The two tasks at at hand are to obtain a formula for the term multiplied by the matrix inverse in Propostion \ref{prop: projection} and to verify the choice for $\tau$ matches that in Proposition \ref{prop: projection}.
    Set 
    \begin{equation}
        q \triangleq [AA^\top + \varepsilon \tau \II]^{-1}(Am + \rho^1 - \rho^0)
    \end{equation}
    so that, by Proposition \ref{prop: projection} and (\ref{eq: emd-A-operation}),
    \begin{equation}
        \sP_\sC(z) = z - A^\top q = z - \left[ \begin{array}{c} K^\top q^1 \\ K^\top (q^2)^\top \end{array}\right].
    \end{equation}
    It follows that 
    \begin{equation}
        [AA^\top + \varepsilon\tau\II] q = Am + \rho^1 - \rho^0.
    \end{equation}    
    By the formulas in  (\ref{eq: emd-A-operation}),  
    \begin{equation} 
        [AA^\top + \varepsilon\tau \II] q
        = \left[ KK^\top  + \frac{\varepsilon\tau}{2}\II\right] q + q \left[ KK^\top + \frac{\varepsilon\tau}{2}\II\right] .
    \end{equation}    
    For $U\Sigma V^\top$ the SVD of $K$, direct multiplication reveals
    \begin{equation}
        KK^\top + \frac{\varepsilon\tau}{2}\II  
        = U \mbox{diag}(\sigma_i^2) U^\top + \frac{\varepsilon\tau}{2}\II  
        = U \, \mbox{diag}\left(\sigma_i^2 + \frac{\varepsilon\tau}{2}\right)   U^\top,
    \end{equation}
    where the final equality holds by the orthogonality of $U$. 
    Set $Y = U^\top q  U$ so that 
    \begin{equation}
        U\, \mbox{diag}\left(\sigma_i^2 + \frac{\varepsilon\tau}{2}\right)  Y U^\top 
        + U Y\, \mbox{diag}\left(\sigma_i^2 + \frac{\varepsilon\tau}{2}\right)  U^\top 
        = [AA^\top + \varepsilon\tau \II]q
        = Am + \rho^1 - \rho^0.
        \label{eq: app-emd-proj-proof-1}
    \end{equation}
    Left multiplying each term by $U^\top$ and then right multiplying by $U$, (\ref{eq: app-emd-proj-proof-1}) becomes   
    \begin{equation}
        \mbox{diag}\left(\sigma_i^2 + \frac{\varepsilon\tau}{2}\right)  Y
        + Y  \mbox{diag}\left(\sigma_i^2 + \frac{\varepsilon\tau}{2}\right)  = U^\top (Am + \rho^1 - \rho^0) U.
        \label{eq: app-emd-proj-proof-2}
    \end{equation}
    In element-wise form, (\ref{eq: app-emd-proj-proof-2}) may be equivalently written as
    \begin{equation}
        \left(\sigma_{i}^2 +  \frac{\varepsilon\tau}{2}\right)Y_{ij}
        + Y_{ij} \left(\sigma_j^2 + \frac{\varepsilon\tau}{2}\right)
        = (U^\top (Am + \rho^1 - \rho^0) U)_{ij},
        \quad \mbox{for all}\ i,j \in[n].
    \end{equation}
    Thus, 
    \begin{equation}
        Y_{ij} 
        = \frac{(U^\top (Am + \rho^1 - \rho^0)U)_{ij}}{\sigma_i^2 + \sigma_j^2 + \varepsilon\tau},
    \end{equation}
    where the division is well-defined since \ref{c: full-rank} ensures $\sigma_i^2 + \sigma_j^2 + \varepsilon\tau > 0$ for all $\tau > 0$.
    This verifies the formula for $Y_\tau$ in the lemma statement.
    Lastly, note the orthogonality of $U$ ensures
    \begin{equation}
       \| q \|_F =  \|UY_\tau U^\top \|_F 
       = \| Y_\tau U^\top \|_F 
       = \|Y_\tau \|_F,
    \end{equation}     
    which verifies the condition on $\tau$ may be expressed as $1 = \tau \|Y_\tau\|_F$.        
\end{proof}

\paragraph*{Proximal Projection (PP).} We used Algorithm \ref{alg: pp-emd} with $\alpha = 10^{-4}$. 

\paragraph*{Primal Dual Hybrid Gradient (PDHG).} 
We use the iteration in (\ref{eq: app-pdhg}) with $\lambda = 5$ and  $\alpha = 1/(5 \|A^\top A\|)$.

\paragraph*{G-Prox PDHG.}
Using the Hodge decomposition, the flux $m$ can be decomposed as $m = u + \nabla \psi$, where $u$ is a divergence free vector field and $\nabla \psi$ is a gradient field for which  
\begin{equation}
    \mbox{div}(\nabla \psi) + \rho^1 - \rho^0 = 0.
\end{equation}
In this example, we used
\begin{equation}
    \nabla \psi = \sP_{\sC}(0). 
\end{equation}
With $m = u + \nabla \psi$, the EMD problem can be rewritten as
\begin{equation}
    \min_{u} \| u + \nabla \psi \|_1 
    \quad\mbox{s.t.}\quad 
    \mbox{div}(u) = 0.  
\end{equation} 
To solve this, for parameters $\sigma > 0$ and $\tau > 0$, \cite{jacobs2019solving} proposes the iteration\footnote{Here we compress (3.14), (3.15), and (3.16) from that work.}
\begin{subequations}
    \begin{align}
        p^{n+1} & = 
        \frac{p^n + \sigma [u^n + \nabla \psi]}{\max\{ 1,  |p^n + \sigma [u^n + \nabla \psi|]\}}
        \\
        u^{n+1} & = u^n - \tau \sP_{\nabla^\perp}(2p^{n+1} - p^n),
    \end{align}
\end{subequations}
where $\nabla^\perp$ is the set of divergence free vector fields.
To compute this projection, we use Lemma \ref{lemma: projection-emd}, but with $\sC = \{ m : \|\mbox{div}(m) \|\leq \varepsilon\}$. 
This iteration is guaranteed to converge for $\sigma \tau < 1$; we used $\tau = 10^{-4}$ and $\sigma = 10^4$.

\newpage

\subsection{Stable Matrix Completion} \label{app: smc}

We begin with notation. In vectorized form of $X$, the matrix $A$ for (\ref{eq: problem-smc}) would be a diagonal matrix with $mn$ rows and $mn$ columns.
The $ij$-th entry on the diagonal would be $1$ if $(i,j)\in\Omega$ and $0$ otherwise. Consequently, $A = A^\top$ and $A = \sP_\Omega$.
With this choice for $A$, we may prove the projection formula. 

{\bf Lemma \ref{lemma: projection-mc}} (SMC Projection). 
\textit{\lemmaProjectionSMC}\\
\begin{proof}
    Consider a matrix $X$. 
    By Proposition \ref{prop: projection}, if $X\in\sC$, then $\sP_\sC(X) = X$.
    In what remains, we assume $X\notin\sC$.
    By Proposition \ref{prop: projection}, the projection is given by 
    \begin{equation}
        \sP_\sC(X) 
        = X - A^\top (AA^\top + \varepsilon\tau_X \II)(AX - \sP_\Omega(M)).
        = X - \sP_\Omega \left( \left[ \sP_\Omega + \varepsilon\tau_X\II\right]^{-1}\left[ \sP_\Omega(X - M)\right] \right),
    \end{equation}
    where again we note  $A=A^\top = \sP_\Omega$ and $\sP_\Omega^2 = \sP_\Omega$.
    Set 
    \begin{equation}
        Y \triangleq 
        (\sP_\Omega + \varepsilon\tau_X \II)^{-1} \sP_\Omega(X-M).
    \end{equation} 
    This implies
    \begin{equation}
        \sP_\Omega(Y) + \varepsilon \tau_X Y = \sP_\Omega(X-M)
        \quad\iff\quad 
        Y_{ij} = 
        \begin{cases}
            \begin{array}{cl}
                \frac{X_{ij}-M_{ij}}{1+\varepsilon\tau_X} & \mbox{if}\ (i,j)\in\Omega,\\
                0 & \mbox{otherwise},
            \end{array}
        \end{cases} 
        \quad \mbox{for all}\ (i,j)\in[m]\times[n].
    \end{equation}
    Consequently, 
    \begin{equation}
        \sP_\Omega(X) = X - \sP_\Omega(Y) 
        = X - \sP_\Omega\left(\frac{X-M}{1+\varepsilon \tau_X}\right)
        = \sP_{\Omega^\perp}(X) + \sP_\Omega\left( \frac{M + \varepsilon \tau_X X}{1+\varepsilon\tau_X}\right).
        \label{eq: app-proof-smc-projection-1}
    \end{equation}
    By Proposition \ref{prop: projection}, $\tau_X$ is the unique positive scalar satisfying 
    \begin{equation}
        1 = \tau_X \|  Y \|_F 
        = \tau_X  \left\| \sP_\Omega\left( \frac{X-M}{1+\varepsilon\tau_X}\right)\right\|_F
        = \frac{\tau_X}{1+\varepsilon\tau_X} \| \sP_\Omega(X-M)\|_F, 
    \end{equation}
    and so rearranging reveals 
    \begin{equation}
        \tau_X = \frac{1}{\|\sP_\Omega(X-M)\|_F - \varepsilon}.
    \end{equation}
    Plugging   this choice for $\tau_X$ into the projection formula (\ref{eq: app-proof-smc-projection-1}) reveals 
    \begin{equation}
        \sP_\Omega(X) 
        = \sP_{\Omega^\perp} (Z) + \sP_\Omega\left( \frac{[\|\sP_\Omega(Z-M)\|_F - \varepsilon] M + \varepsilon Z}{\|\sP_\Omega(Z-M)\|_F} \right) 
    \end{equation}
    as desired.
\end{proof}

{\bf Smoothed Proximal Gradient (SPG).}
Following \cite{aybat2014efficient}, for a parameter $\mu > 0$ we consider the smoothed problem
\begin{equation}
    \min_{X}   \left( \max_{\|W\|\leq 1} \left<X,W\right> -\frac{\mu}{2}\|W\|_F^2 \right)  
    \quad \mbox{s.t.}\quad 
    \|\sP_\Omega(X - M) \|_F \leq \varepsilon,
    \label{eq: problem-smc-smoothed}
    \tag{SMC$_\mu$}
\end{equation}
In much the same fashion as the stable principal component pursuit problem in Appendix \ref{app: spcp}, here the  smooth proximal gradient updates can be rewritten as  
    \begin{align} 
        X^{k+1} =  \sP_\Omega\left(\frac{1}{1 + \mu\theta^\star} \Big[  \mu \theta^\star  M  +  \mbox{svt}(X^k, \ \mu)\Big] \right) + \sP_{\Omega^\perp} \left( \mbox{svt}(X^k,\ \mu) \right),
    \end{align} 
with  
\begin{equation}
    \varepsilon 
    = \left\|  \frac{\sP_\Omega(M-\mbox{svt}(X^k,\ \mu))}{1+\mu \theta^\star} \right\|_F
    \quad\iff \quad 
     \theta^\star 
    = \frac{\| \sP_\Omega(M - \mbox{svt}(X^k,\mu))\|_F - \varepsilon}{ \mu \varepsilon}
\end{equation}
Here we use $\mu = 10$. As noted before, reducing $\mu$ reduces the sub-optimality gap of the limit of SPG, but reduces the smoothness and, thus, step-size (which hinders convergence rate).

\paragraph*{Variant of Alternating Splitting Augmented Lagrangian Method (VASALM).}
Similar to (\ref{eq: problem-spcp}), for a parameter $\alpha > 0$, the augmented Lagrangian $\sL$ for (\ref{eq: problem-smc}) is 
\begin{equation}
    \sL_\alpha (X, N; \Lambda)
    = \|X\|_\star   + \delta_{B(0,\varepsilon)}(\sP_\Omega(N)) + \left<\Lambda, \sP_\Omega(X + N - M)\right> + \frac{\alpha}{2}\|\sP_\Omega(X + N - M)\|_F^2.
\end{equation}
\def\sK{\mathcal{K}}
Define the set
\begin{equation}
    \sK \triangleq \{ Y : \|\sP_\Omega(Y)\|_F \leq \varepsilon\}.
\end{equation}
The projection $\sP_\sK$ onto $\sK$ is simply a Euclidean projection onto the $\varepsilon$-ball about the origin for the submatrix $Y_\Omega$ and the remainder of $Y$ is unchanged by the projection.  
The VASALM algorithm in this context uses the iterates 
\begin{subequations}
    \begin{align}
        N^{k+1}
        & = P_{\sK}\left( \frac{\Lambda^k}{\alpha} + \sP_\Omega(M) - X  \right)\\
        \hat{\Lambda}^k 
        & = \Lambda^k - \alpha \left( X^k + N^k -\sP_\Omega( M^k)\right) \\ 
        X^{k+1} 
        & = \mbox{svt}\left(X^k + \frac{\hat{\Lambda}^k}{\eta\alpha},\ \frac{1}{\eta\alpha}\right) \\ 
        \Lambda^{k+1}
        & = \hat{\Lambda}^k + \alpha (X^k - X^{k+1}),
    \end{align}
\end{subequations}
where $\eta > 2$. We use $\eta = 3$ and $\alpha = 10^{-2}$.
 
\end{document}